\documentclass[12pt]{article} 
\usepackage{amsfonts,amsmath,latexsym,amssymb,mathrsfs,amsthm}
\usepackage{slashbox}
\usepackage{comment}

\evensidemargin0cm
\oddsidemargin0cm
\textwidth16cm
\textheight22.8cm
\topmargin-1.7cm  



\let\OLDthebibliography\thebibliography
\renewcommand\thebibliography[1]{
  \OLDthebibliography{#1}
  \setlength{\parskip}{0pt}
  \setlength{\itemsep}{0pt plus 0.3ex}
}


%

\def\numberlikeadb{\global\def\theequation{\thesection.\arabic{equation}}}
\numberlikeadb
\newtheorem{theorem}{Theorem}[section]
\newtheorem{lemma}[theorem]{Lemma}

\newtheorem{corollary}[theorem]{Corollary}

\newtheorem{proposition}[theorem]{Proposition}
\newtheorem{remark}[theorem]{Remark}

\usepackage{color}

\usepackage{lscape}
\usepackage{caption}
\usepackage{multirow}
\begin{document}

\title{Bounds for modified Lommel functions of the first kind and their ratios}
\author{Robert E. Gaunt\footnote{Department of Mathematics, The University of Manchester, Oxford Road, Manchester M13 9PL, UK}}

\date{\today} 
\maketitle

\vspace{-5mm}

\begin{abstract}The modified Lommel function $t_{\mu,\nu}(x)$ is an important special function, but to date there has been little progress on the problem of obtaining functional inequalities for $t_{\mu,\nu}(x)$.  In this paper, we advance the literature substantially by obtaining a simple two-sided inequality for the ratio $t_{\mu,\nu}(x)/t_{\mu-1,\nu-1}(x)$ in terms of the ratio $I_\nu(x)/I_{\nu-1}(x)$ of modified Bessel functions of the first kind, thereby allowing one to exploit the extensive literature on bounds for this ratio.  We apply this result to obtain two-sided inequalities for the condition numbers $xt_{\mu,\nu}'(x)/t_{\mu,\nu}(x)$, the ratio $t_{\mu,\nu}(x)/t_{\mu,\nu}(y)$ and the modified Lommel function $t_{\mu,\nu}(x)$ itself that are given in terms of $xI_\nu'(x)/I_\nu(x)$, $I_\nu(x)/I_\nu(y)$ and $I_\nu(x)$, respectively.  The bounds obtained in this paper are quite accurate and often tight in certain limits.  As an important special case we deduce bounds for modified Struve functions of the first kind and their ratios, some of which are new, whilst others extend the range of validity of some results given in the recent literature. 
\end{abstract}



\noindent{{\bf{Keywords:}}} Modified Lommel function; bounds; ratios of modified Lommel functions; condition numbers; modified Struve function of the first kind; modified Bessel function of the first kind

\noindent{{{\bf{AMS 2010 Subject Classification:}}} Primary 33C20; 26D07. Secondary 33C10

\section{Introduction}\label{intro}

The ratios of modified Bessel functions $I_{\nu}(x)/I_{\nu-1}(x)$ and $K_{\nu-1}(x)/K_{\nu}(x)$ arise throughout the applied sciences; see \cite{segura} and references therein for examples of application areas.  These ratios are also important computational tools in the construction of numerical algorithms for computing modified Bessel functions; see, for example, Algorithms 12.6 and 12.7 of \cite{ast07}.  There now exists a substantial literature on lower and upper bounds for these ratios of modified Bessel functions \cite{amos74,b15,g32,kg13,ifantis,il07,im78, ln10,nasell2,pm50, rs16,segura,s12,yz17}.  There is also a extensive literature on bounds for the ratios $I_\nu(x)/I_\nu(y)$ and $K_\nu(x)/K_\nu(y)$, see, for example, \cite{amos74, b09, baricz2, ifantis, jbb, l91, paris, si95}, which has been used, for example, to obtain tight bounds for the generalized Marcum Q-function, which arises in signal processing \cite{b09}.  

The modified Struve and modified Lommel functions are related to the modified Bessel functions, and, likewise, they arise in manyfold applications.  A list of further application areas of modified Struve functions is given in \cite{bp13}, and, amongst other applications, Modified Lommel functions arise in the study of steady-state temperature distribution \cite{l75}, the vortex theory of screw propellers \cite{g29}, scattering amplitudes in quantum optics \cite{t73} and stress distributions in cylindrical objects \cite{s85}.

The first detailed study of inequalities for modified Struve functions was \cite{jn98}, in which two-sided inequalities for modified Struve functions and their ratios were obtained, together with Tur\'{a}n and Wronski type inequalities.  Recently, \cite{bp14,bps17,gaunt ineq5} have used results from the extensive study of modified Bessel functions and their ratios to obtain monotonicity results and functional inequalities for the modified Struve function of the first kind $\mathbf{L}_\nu(x)$ (some similar results for the modified Struve function of the second kind $\mathbf{M}_\nu(x)=\mathbf{L}_\nu(x)-I_\nu(x)$ are given in \cite{bp142}).  In particular, \cite{gaunt ineq5} obtained a simple but accurate two-sided inequality for the ratio $\mathbf{L}_{\nu}(x)/\mathbf{L}_{\nu-1}(x)$ in terms of the ratio $I_\nu(x)/I_{\nu-1}(x)$ of modified Bessel functions of the first kind.  This result allows one to take advantage of the literature on bounds for the ratio $I_\nu(x)/I_{\nu-1}(x)$ to obtain a variety of accurate bounds for $\mathbf{L}_\nu(x)/\mathbf{L}_{\nu-1}(x)$.  \cite{gaunt ineq5} also obtained two-sided inequalities for the ratio $\mathbf{L}_\nu(x)/\mathbf{L}_\nu(y)$ and the condition numbers $x\mathbf{L}_\nu'(x)/\mathbf{L}_\nu(x)$ in terms of the analogous terms involving $I_\nu(x)$.

The modified Lommel function $t_{\mu,\nu}(x)$ is an important special function (see Section \ref{seclom} for references and basic properties), which generalises the modified Struve function $\mathbf{L}_\nu(x)$ (see also Section \ref{seclom}) and has numerous applications, but despite this interest only \cite{css18} and the recent preprint \cite{m17} have touched on the problem of obtaining inequalities for this function.  In \cite{css18}, accurate bounds for the function $t_{\mu,\nu}(x)$ are obtained, but these bounds are only valid for $0<x<1$; whilst in \cite{m17}, some monotonicity properties of modified Lommel functions were established and a Redheffer type bound was obtained for the function $t_{\mu-1/2,1/2}(x)$. Some difficulties in obtaining inequalities for modified Lommel functions arise from the fact that, despite sharing close analogues of certain standard properties of $I_\nu(x)$ and $\mathbf{L}_\nu(x)$ (see again Section \ref{seclom}), in some circumstances the modified Lommel function $t_{\mu,\nu}(x)$ is more difficult to work with.  For example, the integral representation of $t_{\mu,\nu}(x)$ (see \cite{s36}) is more complicated than those of $I_\nu(x)$ and $\mathbf{L}_\nu(x)$ (see Sections 10.32(i) and 11.5(i) of \cite{olver}).  This means that the proof of the main result, Theorem 2.1, of \cite{gaunt ineq5} which bounds $\mathbf{L}_\nu(x)/\mathbf{L}_{\nu-1}(x)$ in terms of $I_\nu(x)/I_{\nu-1}(x)$ cannot easily be adapted to the ratio $t_{\mu,\nu}(x)/t_{\mu-1,\nu-1}(x)$.

In this paper, we obtain the first bounds in the literature  for the ratios of modified Lommel functions $t_{\mu,\nu}(x)/t_{\mu-1,\nu-1}(x)$ and $t_{\mu,\nu}(x)/t_{\mu,\nu}(y)$, the condition numbers $xt_{\mu,\nu}'(x)/t_{\mu,\nu}(x)$ and the function $t_{\mu,\nu}(x)$ itself (except for the special case considered by \cite{css18} and \cite{m17}).  The starting point for these inequalities is Theorem \ref{thmil}, in which we obtain a simple but accurate two-sided inequality for  $t_{\mu,\nu}(x)/t_{\mu-1,\nu-1}(x)$ in terms of $I_\nu(x)/I_{\nu-1}(x)$.  This result is quite powerful because it allows one to exploit the extensive literature on bounds for $I_\nu(x)/I_{\nu-1}(x)$ to bound $t_{\mu,\nu}(x)/t_{\mu-1,\nu-1}(x)$.  We thus progress the literature from having no bounds for the ratio $t_{\mu,\nu}(x)/t_{\mu-1,\nu-1}(x)$ to a wide variety, and we note some examples.  As a special case of Theorem \ref{thmil}, we obtain the same two-sided inequality for the ratio $\mathbf{L}_\nu(x)/\mathbf{L}_{\nu-1}(x)$ that was obtained in Theorem 2.1 of \cite{gaunt ineq5}, but with a larger range of validity for the lower bound.  This arises from an alternative method of proof that avoids the use of integral representations of the functions $\mathbf{L}_\nu(x)$ and $I_\nu(x)$ which were used to prove Theorem 2.1 of \cite{gaunt ineq5}.  In Section \ref{sec3}, we apply our bounds for $t_{\mu,\nu}(x)/t_{\mu-1,\nu-1}(x)$ to obtain a number of further functional inequalities involving $t_{\mu,\nu}(x)$.  Amongst other results, we obtain two-sided inequalities for $xt_{\mu,\nu}'(x)/t_{\mu,\nu}(x)$, $t_{\mu,\nu}(x)/t_{\mu,\nu}(y)$ and $t_{\mu,\nu}(x)$ that are given in terms of $xI_\nu'(x)/I_\nu(x)$, $I_\nu(x)/I_\nu(y)$ and $I_\nu(x)$, respectively.  
Through a combination of asymptotic analysis of the bounds and numerical results, we find that, in spite of their simple form, the bounds obtained in this paper are quite accurate and often tight in certain limits.

\section{Modified Lommel functions}\label{seclom}

In this section, we collect some basic properties of modified Lommel functions that will be needed in the sequel. Particular solutions of the modified Lommel differential equation \cite{s36,r64} (the Lommel differential equation was originally studied by \cite{l75})
\begin{equation}\label{3740a}x^2f''(x)+xf'(x)-(x^2+\nu^2)f(x)=x^{\mu+1}
\end{equation}
are the modified Lommel function of the first kind $t_{\mu,\nu}(x)$, as given by the hypergeometric series
\begin{align}t_{\mu,\nu}(x)&=\frac{x^{\mu+1}}{(\mu-\nu+1)(\mu+\nu+1)} {}_1F_2\bigg(1;\frac{\mu-\nu+3}{2},\frac{\mu+\nu+3}{2};\frac{x^2}{4}\bigg)\nonumber \\
\label{ldefn}&=2^{\mu-1}\Gamma\big(\tfrac{\mu-\nu+1}{2}\big)\Gamma\big(\tfrac{\mu+\nu+1}{2}\big)\sum_{k=0}^\infty\frac{(\frac{1}{2}x)^{\mu+2k+1}}{\Gamma\big(k+\frac{\mu-\nu+3}{2}\big)\Gamma\big(k+\frac{\mu+\nu+3}{2}\big)},
\end{align}
and the modified Lommel function of the second kind $T_{\mu,\nu}(x)$, as given by
\begin{equation}\label{Teqn}T_{\mu,\nu}(x)=t_{\mu,\nu}(x) -2^{\mu-1}\Gamma\big(\tfrac{\mu-\nu+1}{2}\big)\Gamma\big(\tfrac{\mu+\nu+1}{2}\big)I_\nu(x).
\end{equation}
In the literature, different notation is used for the modified Lommel functions; we adopt that used by \cite{zs13}. The terminology modified Lommel function of the \emph{first kind} and \emph{second kind} is not standard in the literature, but we consider it to be natural and adopt it for the following reasons.  Firstly, as we shall shortly see, the modified Lommel functions of the first and second kind generalise (up to  a multiplicative constant) the modified Struve functions of the first and second kind.  Secondly, \cite{bk16} have used the terminology Lommel function of the \emph{first kind} for the Lommel function $s_{\mu,\nu}(x)$.  Since $t_{\mu,\nu}(x)=-\mathrm{i}^{1-\mu}s_{\mu,\nu}(\mathrm{i}x)$ \cite{r64,zs13}, in analogy with the terminology for the Bessel, Struve and modified Bessel and Struve functions, it is natural to call $t_{\mu,\nu}(x)$ a modified Lommel function of the first kind, and $T_{\mu,\nu}(x)$ a modified Lommel function of the second kind. At this stage, we note that the focus of this paper is the function $t_{\mu,\nu}(x)$; we introduce $T_{\mu,\nu}(x)$ solely to fix notation and to allow us to state a result from the literature that will be required in the sequel. 

As the modified Lommel functions arise as Lommel functions with imaginary argument, many of their properties can be inferred from those of Lommel functions that are given in references, such as \cite{b67,cy11,l62,l69,olver,w44}.  The following relations involving modified Lommel functions that were given by \cite{r64} will be needed in the sequel:
\begin{align*}t_{\mu+2,\nu}(x)&=x^{\mu+1}+[(\mu+1)^2-\nu^2]t_{\mu,\nu}(x), \\
\frac{2\nu}{x}t_{\mu,\nu}(x)&=(\mu+\nu-1)t_{\mu-1,\nu-1}(x)-(\mu-\nu+1)t_{\mu-1,\nu+1}(x), \\
2t_{\mu,\nu}'(x)&= (\mu+\nu-1)t_{\mu-1,\nu-1}(x)+(\mu-\nu+1)t_{\mu-1,\nu+1}(x). 
\end{align*}
Combining these expressions gives the relations
\begin{align}\label{lomrel1}(\mu+\nu-1)t_{\mu-1,\nu-1}(x)-\frac{1}{\mu+\nu+1}t_{\mu+1,\nu+1}(x)=\frac{2\nu}{x}t_{\mu,\nu}(x)+\frac{x^\mu}{\mu+\nu+1}, \\
\label{lomrel2}(\mu+\nu-1)t_{\mu-1,\nu-1}(x)+\frac{1}{\mu+\nu+1}t_{\mu+1,\nu+1}(x)=2t_{\mu,\nu}'(x)-\frac{x^\mu}{\mu+\nu+1}.
\end{align}
The modified Lommel function $t_{\mu,\nu}(x)$ is related to the modified Bessel function $I_\nu(x)$ through the indefinite integral formula (see \cite{d59,r64} and apply (\ref{Teqn}))
\begin{equation}\label{lommelint}\int x^\mu I_\nu(x)\,\mathrm{d}x=x\big((\mu+\nu-1)I_\nu(x)t_{\mu-1,\nu-1}(x)-I_{\nu-1}(x)t_{\mu,\nu}(x)\big).
\end{equation}

For the purposes of this paper, the following normalization is particularly convenient and will remove a number of multiplicative constants from our calculations:
\begin{align}\label{tildet}\tilde{t}_{\mu,\nu}(x)&=\frac{1}{2^{\mu-1}\Gamma\big(\frac{\mu-\nu+1}{2}\big)\Gamma\big(\frac{\mu+\nu+1}{2}\big)}t_{\mu,\nu}(x) \\
\label{tseries}&=\sum_{k=0}^\infty\frac{(\frac{1}{2}x)^{\mu+2k+1}}{\Gamma\big(k+\frac{\mu-\nu+3}{2}\big)\Gamma\big(k+\frac{\mu+\nu+3}{2}\big)}. 
\end{align}
Analogously, we introduce the normalization
\begin{equation}\label{3740}\widetilde{T}_{\mu,\nu}(x)=\frac{1}{2^{\mu-1}\Gamma\big(\frac{\mu-\nu+1}{2}\big)\Gamma\big(\frac{\mu+\nu+1}{2}\big)}T_{\mu,\nu}(x)=\tilde{t}_{\mu,\nu}(x)-I_\nu(x).
\end{equation}
For ease of exposition, we shall also refer to $\tilde{t}_{\mu,\nu}(x)$ and $\widetilde{T}_{\mu,\nu}(x)$ as modified Lommel functions of the first and second kind.  From now on, we shall work with the functions $\tilde{t}_{\mu,\nu}(x)$ and $\widetilde{T}_{\mu,\nu}(x)$; it is easy to translate results back to $t_{\mu,\nu}(x)$ and $T_{\mu,\nu}(x)$.  For example, using (\ref{tildet}) and the standard formula $u\Gamma(u)=\Gamma(u+1)$ gives that
\begin{align*}\frac{t_{\mu,\nu}(x)}{t_{\mu-1,\nu-1}(x)}=(\mu+\nu-1)\frac{\tilde{t}_{\mu,\nu}(x)}{\tilde{t}_{\mu-1,\nu-1}(x)}.
\end{align*}

It is evident from (\ref{tseries}) that
\begin{equation*}\tilde{t}_{\mu,-\nu}(x)=\tilde{t}_{\mu,\nu}(x).
\end{equation*}
Also, from the power series representations of the modified Bessel function of the first kind $I_\nu(x)$ and the modified Struve function of first kind $\mathbf{L}_\nu(x)$, as given by
\begin{equation*}I_\nu(x)=\sum_{k=0}^\infty\frac{(\frac{1}{2}x)^{2k+\nu}}{k!\Gamma(k+\nu+1)}, \quad \mathbf{L}_\nu(x)=\sum_{k=0}^\infty\frac{(\frac{1}{2}x)^{2k+\nu+1}}{\Gamma(k+\frac{3}{2})\Gamma(k+\nu+\frac{3}{2})},
\end{equation*}
we can record the important special cases
\begin{equation*}\tilde{t}_{\nu-2n-1,\nu}(x)=I_\nu(x),\:  n=0,1,2,\ldots, \quad \tilde{t}_{\nu,\nu}(x)= \mathbf{L}_\nu(x), \quad \widetilde{T}_{\nu,\nu}(x)= \mathbf{M}_\nu(x),
\end{equation*}
where $\mathbf{M}_\nu(x)=\mathbf{L}_\nu(x)-I_\nu(x)$ is the modified Struve function of the second kind.  With our normalization, formulas (\ref{lomrel1}), (\ref{lomrel2}) and (\ref{lommelint}) become 
\begin{align}\label{struveid1}\tilde{t}_{\mu-1,\nu-1}(x)-\tilde{t}_{\mu+1,\nu+1}(x)&=\frac{2\nu}{x}\tilde{t}_{\mu,\nu}(x)+a_{\mu,\nu}(x), \\
\label{struveid2}\tilde{t}_{\mu-1,\nu-1}(x)+\tilde{t}_{\mu+1,\nu+1}(x)&=2\tilde{t}_{\mu,\nu}'(x)-a_{\mu,\nu}(x),
\end{align}
where $a_{\mu,\nu}(x)=\frac{(x/2)^\mu}{\Gamma(\frac{\mu-\nu+1}{2})\Gamma(\frac{\mu+\nu+3}{2})}$ (in the exceptional cases that $\mu-\nu=-1,-3,-5,\ldots$ or $\mu+\nu=-3,-5,-7,\ldots$ we have $a_{\mu,\nu}(x)=0$), and
\begin{equation}\label{lommelint2}\int x^\mu I_\nu(x)\,\mathrm{d}x=2^{\mu-1}\Gamma\big(\tfrac{\mu-\nu+1}{2}\big)\Gamma\big(\tfrac{\mu+\nu+1}{2}\big)x\big(I_\nu(x)\tilde{t}_{\mu-1,\nu-1}(x)-I_{\nu-1}(x)\tilde{t}_{\mu,\nu}(x)\big).
\end{equation}
The function $\tilde{t}_{\mu,\nu}(x)$ has the following asymptotic behaviour:
\begin{align}\label{ttend0}\tilde{t}_{\mu,\nu}(x)&\sim\frac{(\frac{1}{2}x)^{\mu+1}}{\Gamma\big(\frac{\mu-\nu+3}{2}\big)\Gamma\big(\frac{\mu+\nu+3}{2}\big)}\bigg(1+\frac{x^2}{(\mu+3)^2-\nu^2}\bigg), \quad x\downarrow0,\:\mu>-3,\:|\nu|<\mu+3,  \\
\label{linfty}\tilde{t}_{\mu,\nu}(x)&\sim\frac{\mathrm{e}^x}{\sqrt{2\pi x}}\bigg(1-\frac{4\nu^2-1}{8x}+\frac{(4\nu^2-1)(4\nu^2-9)}{128x^2}\bigg), \quad x\rightarrow\infty, \:\mu,\nu\in\mathbb{R}.
\end{align}
Here, (\ref{ttend0}) follows directly from (\ref{tseries}), and (\ref{linfty}) can be obtained by using the large $x$ asymptotic expansions of generalized hypergeometric functions \cite[Section 16.11]{olver}.

We end this section by recording that the modified Bessel function of the first kind satisfies the relations
\begin{align}\label{123e} I_{\nu-1}(x)- I_{\nu+1}(x)&=\frac{2\nu}{x} I_{\nu}(x), \\
\label{456e} I_{\nu-1}(x)+ I_{\nu+1}(x)&=2 I_{\nu}'(x),
\end{align}
and has the following asymptotic behaviour:
\begin{align}\label{itend0}I_\nu(x)&\sim \frac{x^\nu}{2^\nu\Gamma(\nu+1)}, \quad x\downarrow0,\:\nu\not=-1,-2,-3\ldots,\\
\label{iinfty}I_\nu(x)&\sim\frac{\mathrm{e}^x}{\sqrt{2\pi x}}\bigg(1-\frac{4\nu^2-1}{8x}+\frac{(4\nu^2-1)(4\nu^2-9)}{128x^2}\bigg), \quad x\rightarrow\infty,\: \nu\in\mathbb{R}
\end{align}
(see \cite{olver} for these and further properties).  It is instructive to note the similarity between properties (\ref{123e})--(\ref{iinfty}) of the modified Bessel function $I_\nu(x)$ and the analogous properties of the modified Lommel function $\tilde{t}_{\mu,\nu}(x)$.   Indeed, the asymptotic expansions (\ref{linfty}) and (\ref{iinfty}) are identical.  Thus, we see that  the modified Lommel function of the first kind $\tilde{t}_{\mu,\nu}(x)$ is the dominant solution to (\ref{3740a}) as $x\rightarrow\infty$ (with asymptotic expansion identical to that of the modified Bessel function of the first kind), and that the modified Lommel function of the second kind $\widetilde{T}_{\mu,\nu}(x)$ (as given by (\ref{3740})) must be recessive, as is the case for the modified Bessel function of the second kind $K_\nu(x)$.  This provides another sense in which the terminology modified Lommel functions of the ``first and second kind" seems to be natural.




\section{Upper and lower bounds for the ratio $t_{\mu,\nu}(x)/t_{\mu-1,\nu-1}(x)$}\label{sec2}

\subsection{Preliminaries and first results}\label{sec2.1}

We will need the following old result of \cite{bk55} in the sequel.  The importance of this result and its consequences in a general setting was first realised by \cite{pv97}.

\begin{lemma}\label{lem2.1}Suppose that the power series $f(x) =\sum_{n\geq0}a_nx^n$ and $g(x) =\sum_{n\geq0}b_nx^n$, where $a_n \in\mathbb{R}$ and $b_n > 0$ for all $n\geq0$, both converge on $(-r, r)$, $r > 0$. If the sequence $\{a_n/b_n\}_{n\geq0}$ is increasing (decreasing), then the function $x \mapsto f(x)/g(x)$ is also increasing (decreasing) on $(0, r)$.
\end{lemma}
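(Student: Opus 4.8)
The plan is to prove the monotonicity result—commonly known as the monotone form of l'Hôpital's rule for power series, or the Biernacki--Krzy\.{z} lemma—directly from the structure of the coefficients, assuming $\{a_n/b_n\}_{n\geq0}$ is increasing (the decreasing case follows by replacing $a_n$ with $-a_n$, or by an entirely symmetric argument). First I would observe that since $b_n > 0$ for all $n$ and the series $g(x) = \sum_{n\geq0} b_n x^n$ converges on $(-r,r)$, we have $g(x) > 0$ for all $x \in (0,r)$, so the ratio $f/g$ is well defined there and differentiating is legitimate. The goal is then to show that the numerator of $(f/g)'$, namely $f'(x)g(x) - f(x)g'(x)$, is nonnegative on $(0,r)$.

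The key step is to expand the product $f'(x)g(x) - f(x)g'(x)$ as a power series in $x$ and examine its coefficients. Writing $f'(x)g(x) = \sum_{n\geq0}\big(\sum_{k=0}^{n}(k+1)a_{k+1}b_{n-k}\big)x^n$ and similarly for $f(x)g'(x)$, the coefficient of $x^n$ in the difference is $\sum_{k=0}^{n}\big((k+1)a_{k+1}b_{n-k} - (n-k+1)a_{n-k}b_{k+1}\big)$ (up to a convenient reindexing). The cleaner route is to write $f(x)g'(x) - f'(x)g(x)$—or its negative—as a double sum $\sum_{i,j} c_{ij}\,a_i b_j\, x^{\,i+j-1}(\cdots)$ and then symmetrize over the pair $(i,j)$. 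After collecting the coefficient of each power $x^{m}$ and pairing the term indexed by $(i,j)$ with the term indexed by $(j,i)$, each symmetric pair contributes a factor proportional to $(j-i)\big(a_i b_j - a_j b_i\big) = (j-i)\,b_i b_j\big(\tfrac{a_i}{b_i} - \tfrac{a_j}{b_j}\big)$. Since $b_i, b_j > 0$ and the sequence $\{a_n/b_n\}$ is increasing, the product $(j-i)\big(\tfrac{a_i}{b_i} - \tfrac{a_j}{b_j}\big)$ has a constant sign for every pair, which forces every coefficient of the symmetrized series to have the desired sign.

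The main obstacle, and the step requiring the most care, is the bookkeeping in this symmetrization: getting the index shifts right (the derivative lowers a power and multiplies by the index) so that the cross terms pair up cleanly, and confirming that the leftover diagonal terms $i=j$ vanish (they contribute a factor $(j-i)=0$). Once the coefficient of every power $x^m$ in $f'(x)g(x) - f(x)g'(x)$ is seen to be nonnegative, it follows that $f'(x)g(x) - f(x)g'(x) \geq 0$ for all $x \in (0,r)$, since each term $x^m$ is positive there. Therefore $(f/g)'(x) = \big(f'(x)g(x) - f(x)g'(x)\big)/g(x)^2 \geq 0$ on $(0,r)$, which gives that $x \mapsto f(x)/g(x)$ is increasing, completing the proof.
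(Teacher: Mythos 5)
The paper offers no proof of this lemma to compare against: it is quoted as a known result of Biernacki and Krzy\.{z} \cite{bk55}, with its broader significance attributed to \cite{pv97}. Your argument is correct, and it is essentially the standard proof of this classical result, so you have supplied a self-contained derivation where the paper simply cites. To recap the core of your argument in a fixed sign convention: writing $f'(x)g(x)-f(x)g'(x)=\sum_{i,j\geq0}(i-j)a_ib_jx^{i+j-1}$, collecting the terms with $i+j=m+1$ and pairing $(i,j)$ with $(j,i)$ gives pair contributions $(i-j)(a_ib_j-a_jb_i)=(i-j)b_ib_j\big(\tfrac{a_i}{b_i}-\tfrac{a_j}{b_j}\big)\geq0$ when $\{a_n/b_n\}$ is increasing, the diagonal $i=j$ contributing zero; hence every coefficient of $f'g-fg'$ is nonnegative, and since $g>0$ on $(0,r)$ the quotient rule gives the claim. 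Two small points to tighten in a final write-up: (i) your displayed factor $(j-i)(a_ib_j-a_jb_i)$ carries the orientation of $fg'-f'g$ rather than $f'g-fg'$ --- harmless since you hedge with ``or its negative,'' but one sign convention should be fixed throughout; (ii) the interchange and regrouping of the double series, as well as termwise differentiation, should be justified explicitly by the absolute convergence of power series strictly inside the interval of convergence. With these details filled in, the proof is complete.
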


We note that Lemma \ref{lem2.1} equivalently also holds true when both the power series $f(x)$ and $g(x)$ are even, or both are odd functions.

In the following proposition, we give a simple inequality for the modified Lommel function $\tilde{t}_{\mu,\nu}(x)$ that will be needed in the proof of Lemma \ref{blemma}, which in turn will be used in the proof of Theorem \ref{thmalm}. When $\mu=\nu$, the inequality reduces to an inequality of \cite{bp14} for the modified Struve function $\mathbf{L}_\nu(x)$, and our method of proof follows theirs.  We also note that similar results for the modified Lommel function of the first kind $t_{\mu,\nu}(x)$ are given in parts (iv) and (v) of Theorem 2.1 of \cite{m17}.


\begin{proposition}\label{propint}For $x>0$,
\begin{equation}\label{sinhineq}\tilde{t}_{\mu,\nu}(x)\leq \frac{x^\mu\sinh(x)}{2^{\mu+1}\Gamma\big(\frac{\mu-\nu+3}{2}\big)\Gamma\big(\frac{\mu+\nu+3}{2}\big)},
\end{equation}
if $\mu\geq-\frac{1}{2}$ and $(\mu+3)^2-\nu^2\geq6$.  The inequality is reversed if $-3<\mu\leq-\frac{1}{2}$, $(\mu+3)^2-\nu^2\leq6$ and $|\nu|<\mu+3$, and we have equality if and only if $\mu=\nu=-\frac{1}{2}$.
\end{proposition}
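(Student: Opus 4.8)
The plan is to prove (\ref{sinhineq}) and its reversal by an elementary term-by-term comparison of the two sides as power series in $x$. Write $a=\frac{\mu-\nu+3}{2}$ and $b=\frac{\mu+\nu+3}{2}$, so that $a+b=\mu+3$ and $ab=\frac14\big((\mu+3)^2-\nu^2\big)$. In both regimes the hypotheses force $a,b>0$: in the forward case $(\mu+3)^2\geq\nu^2+6>\nu^2$ with $\mu+3>0$ gives $\mu+3>|\nu|$, while in the reversed case $|\nu|<\mu+3$ is assumed (and $\mu>-3$); hence the series (\ref{tseries}) has strictly positive terms. Expanding the right-hand side of (\ref{sinhineq}) by means of $\sinh(x)=\sum_{k\geq0}x^{2k+1}/(2k+1)!$, both sides become power series supported on exactly the exponents $x^{\mu+2k+1}$, $k\geq0$. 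Since every such monomial is positive for $x>0$, (\ref{sinhineq}) reduces to the coefficient inequality
\[\frac{(1/2)^{\mu+2k+1}}{\Gamma(k+a)\Gamma(k+b)}\leq\frac{1}{2^{\mu+1}\Gamma(a)\Gamma(b)\,(2k+1)!}\qquad(k\geq0),\]
and the reversed form of (\ref{sinhineq}) to the opposite inequality.

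The crucial simplification is the duplication identity $(2k+1)!=4^k(1)_k\big(\tfrac32\big)_k$, where $(c)_k$ is the Pochhammer symbol. Combining it with $\Gamma(k+a)/\Gamma(a)=(a)_k$, $\Gamma(k+b)/\Gamma(b)=(b)_k$ and $(1/2)^{\mu+2k+1}2^{\mu+1}=4^{-k}$, the coefficient inequality above is equivalent to $\rho_k\leq1$, where
\[\rho_k:=\frac{(1)_k\big(\tfrac32\big)_k}{(a)_k(b)_k}.\]
Here $\rho_0=1$ and $\rho_{k+1}/\rho_k=\dfrac{(k+1)\big(k+\tfrac32\big)}{(k+a)(k+b)}$, so everything hinges on the sign of
\[(k+a)(k+b)-(k+1)\big(k+\tfrac32\big)=\Big(ab-\tfrac32\Big)+\Big(\mu+\tfrac12\Big)k,\]
a function linear in $k$ with constant term $ab-\tfrac32=\frac14\big((\mu+3)^2-\nu^2-6\big)$ and slope $\mu+\tfrac12$.

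Under the forward hypotheses $\mu\geq-\tfrac12$ and $(\mu+3)^2-\nu^2\geq6$ both the constant term and the slope are nonnegative, so the displayed difference is $\geq0$ for all $k\geq0$; as the factors are positive this yields $\rho_{k+1}/\rho_k\leq1$, whence $\rho_k\leq\rho_0=1$ by induction and (\ref{sinhineq}) follows. Under the reversed hypotheses the constant term and the slope are both nonpositive, giving $\rho_{k+1}/\rho_k\geq1$, hence $\rho_k\geq1$ and the reversed inequality. For the equality statement, equality in (\ref{sinhineq}) at any admissible $(\mu,\nu)$ and $x>0$ forces $\rho_k=1$ for every $k$; taking $k=1,2$ gives $ab=\tfrac32$ and $a+b=\tfrac52$, i.e.\ $\{a,b\}=\{1,\tfrac32\}$, so $\mu=-\tfrac12$ and $|\nu|=\tfrac12$, which by $\tilde{t}_{\mu,-\nu}(x)=\tilde{t}_{\mu,\nu}(x)$ is precisely $\mu=\nu=-\tfrac12$. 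I expect the only real care to be needed in the positivity bookkeeping — this is exactly why $|\nu|<\mu+3$ must be assumed separately in the reversed regime, where $ab\leq\tfrac32$ no longer implies it — and in recalling the duplication identity that turns $(2k+1)!$ into Pochhammer symbols. Alternatively, since $c_k/d_k=\tfrac{1}{2^{\mu+1}\Gamma(a)\Gamma(b)}\rho_k$, where $c_k,d_k$ denote the coefficients of $x^{\mu+2k+1}$ in $\tilde{t}_{\mu,\nu}(x)$ and in $x^\mu\sinh(x)$, one could invoke Lemma \ref{lem2.1} to deduce that $x\mapsto\tilde{t}_{\mu,\nu}(x)/(x^\mu\sinh(x))$ is monotone and then compare with its limit $\tfrac{1}{2^{\mu+1}\Gamma(a)\Gamma(b)}$ as $x\downarrow0$.
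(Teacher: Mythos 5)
Your proof is correct, and it takes a genuinely different (more elementary) route than the paper, even though both hinge on the identical computation. The paper forms the ratio $x^\mu\sinh(x)/\tilde{t}_{\mu,\nu}(x)$, shows that the sequence of ratios of the corresponding power-series coefficients is monotone --- its ratio of successive terms is exactly the reciprocal of your $\rho_{k+1}/\rho_k$, reducing to the same sign analysis of $\big(\mu+\tfrac12\big)k+\tfrac14\big((\mu+3)^2-\nu^2-6\big)$ --- and then invokes the Biernacki--Krzy\.{z} result (Lemma \ref{lem2.1}) to conclude that $x\mapsto x^\mu\sinh(x)/\tilde{t}_{\mu,\nu}(x)$ is monotone on $(0,\infty)$, finally comparing with the limit as $x\downarrow0$ computed from (\ref{ttend0}); this is precisely the alternative you sketch in your last sentence. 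Your main argument instead dominates the series term by term: $\rho_k\leq\rho_0=1$ (or $\geq1$) gives the coefficient inequality directly, and summing over the positive monomials $x^{\mu+2k+1}$ finishes the proof with no appeal to Lemma \ref{lem2.1} at all. What your route buys: it is self-contained, and it handles the equality assertion properly --- equality at a single $x>0$ forces $\rho_k=1$ for all $k$, whence $\{a,b\}=\{1,\tfrac32\}$ --- whereas the paper's proof, which only compares the monotone ratio against its limit at $0$, never actually addresses the ``equality if and only if $\mu=\nu=-\tfrac12$'' claim (your observation that $(\mu,\nu)=(-\tfrac12,\tfrac12)$ also gives equality, being the same function by $\tilde{t}_{\mu,-\nu}=\tilde{t}_{\mu,\nu}$, is a point the paper's statement glosses over). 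What the paper's route buys: the monotonicity of $x^\mu\sinh(x)/\tilde{t}_{\mu,\nu}(x)$ on all of $(0,\infty)$ is a strictly stronger conclusion than the inequality itself, and it keeps the proof aligned with the Lemma \ref{lem2.1} machinery used conceptually elsewhere in the paper. Your positivity bookkeeping (deducing $a,b>0$ from $\mu\geq-\tfrac12$ and $(\mu+3)^2-\nu^2\geq6$ in the forward case, and from the separately assumed $|\nu|<\mu+3$ in the reversed case) matches the check the paper performs and is exactly where care is needed.
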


\begin{proof}From the series representation (\ref{tseries}) of $\tilde{t}_{\mu,\nu}(x)$ and $\sinh(x)=\sum_{k=0}^\infty\frac{x^{2k+1}}{(2k+1)!}$, we can write
\begin{equation*}\frac{x^\mu\sinh(x)}{\tilde{t}_{\mu,\nu}(x)}=2^{\mu+1}\frac{\sum_{k=0}^\infty\alpha_{k}x^{2k}}{\sum_{k=0}^\infty\beta_{\mu,\nu,k} x^{2k}},
\end{equation*}
where
\begin{equation*} \alpha_k=\frac{1}{(2k+1)!}\quad \text{and} \quad \beta_{\mu,\nu,k}=\frac{1}{2^{2k}\Gamma\big(k+\frac{\mu-\nu+3}{2}\big)\Gamma\big(k+\frac{\mu+\nu+3}{2}\big)}.
\end{equation*}
We now let $q_k=\alpha_k/\beta_{\mu,\nu,k}$ and calculate that
\begin{equation*}\frac{q_{k+1}}{q_k}=\frac{\big(k+\frac{\mu-\nu+3}{2}\big)\big(k+\frac{\mu+\nu+3}{2}\big)}{(k+\frac{3}{2})(k+1)}.
\end{equation*}
For a given $k=0,1,2,\ldots$, this ratio is $\geq1$ if
\[\mu^2+6\mu-\nu^2+3+2k(2\mu+1)\geq0,\]
and therefore $q_{k+1}/q_k\geq1$ for all $k=0,1,2,\ldots$ if $\mu\geq-\frac{1}{2}$ and $(\mu+3)^2-\nu^2\geq6$.  Similarly, $q_{k+1}/q_k\leq1$ for all $k=0,1,2,\ldots$ if $\mu\leq-\frac{1}{2}$ and $(\mu+3)^2-\nu^2\leq6$.  The assumptions on $\mu$ and $\nu$ in the statement of the proposition imply that $\mu-\nu>-3$ and $\mu+\nu>-3$, which ensures that all coefficients in the power series of $\tilde{t}_{\mu,\nu}(x)$ are positive.  Now, as the radius of convergence of the power series of $\sinh(x)$ and $\tilde{t}_{\mu,\nu}(x)$ is infinity, it follows from Lemma \ref{lem2.1} that the function $x\mapsto x^\mu\sinh(x)/\tilde{t}_{\mu,\nu}(x)$ is increasing on $(0,\infty)$ if $\mu\geq-\frac{1}{2}$ and $(\mu+3)^2-\nu^2\geq6$, and decreasing on $(0,\infty)$ if $-3<\mu\leq-\frac{1}{2}$, $(\mu+3)^2-\nu^2\leq6$ and $|\nu|<\mu+3$.  Thus, on computing the limit $\lim_{x\downarrow0}x^\mu\sinh(x)/\tilde{t}_{\mu,\nu}(x)$ using (\ref{ttend0}) we obtain inequality (\ref{sinhineq}) and its reverse.
\end{proof}


We now introduce a function that will appear throughout this paper. For $\mu>-2$ and $|\nu+1|<\mu+2$, we define the function $x\mapsto b_{\mu,\nu}(x):(0,\infty)\rightarrow(0,\frac{1}{2}(\mu-\nu+1))$ by
\begin{equation}\label{bdefn}b_{\mu,\nu}(x):=\frac{xa_{\mu,\nu}(x)}{2 \tilde{t}_{\mu,\nu}(x)}=\frac{(\frac{1}{2}x)^{\mu+1}}{\Gamma\big(\frac{\mu-\nu+1}{2}\big)\Gamma\big(\frac{\mu+\nu+3}{2}\big) \tilde{t}_{\mu,\nu}(x)}.
\end{equation}


\begin{lemma}\label{blemma}Let $\mu>-2$ and $|\nu+1|<\mu+2$ throughout this lemma. Then the following assertions are true:

\vspace{2mm}

\noindent (i) For fixed $\mu$ and $\nu$,
\begin{align}\label{bnu0}b_{\mu,\nu}(x)&\sim\frac{\mu-\nu+1}{2}\bigg(1-\frac{x^2}{(\mu+3)^2-\nu^2}\bigg), \quad x\downarrow0, \\
\label{bnu1}b_{\mu,\nu}(x)&\sim \frac{\sqrt{\pi}x^{\mu+3/2}\mathrm{e}^{-x}}{2^{\mu+1/2}\Gamma\big(\frac{\mu-\nu+1}{2}\big)\Gamma\big(\frac{\mu+\nu+3}{2}\big)}, \quad x\rightarrow\infty.
\end{align}

\vspace{2mm}

\noindent (ii) For fixed $\nu$ and $x>0$, $b_{\mu,\nu}(x)$ increases as $\mu$ increases in the interval $(\max\{\nu-1,-\nu-3\},\infty)$. 

\vspace{2mm}

\noindent (iii) For fixed $\mu$ and $\nu$,  $b_{\mu,\nu}(x)$ is a decreasing function of $x$ in $(0,\infty)$.  Therefore
\begin{equation*}b_{\mu,\nu}(x)<b_{\mu,\nu}(0^+)=\frac{\mu-\nu+1}{2}, \quad x>0.
\end{equation*}
This inequality can be improved further to
\begin{equation}\label{bcrude2}b_{\mu,\nu}(x)<\frac{\mu-\nu+1}{2}\bigg(1+\frac{x^2}{(\mu+3)^2-\nu^2}\bigg)^{-1}, \quad x>0.
\end{equation}

\noindent (iv) For $x>0$, 
\begin{equation}\frac{\mu-\nu+1}{2}x\mathrm{csch}(x)\label{star5}\leq b_{\mu,\nu}(x), 
\end{equation}
if $\mu\geq-\frac{1}{2}$ and $(\mu+3)^2-\nu^2\geq6$.  The inequality is reversed if $\mu\leq-\frac{1}{2}$ and $(\mu+3)^2-\nu^2\leq6$, and we have equality if and only if $\mu=\nu=-\frac{1}{2}$.
\end{lemma}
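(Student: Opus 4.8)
The plan is to reduce all four parts to a single normalised series for $b_{\mu,\nu}(x)$. Dividing the series (\ref{tseries}) for $\tilde t_{\mu,\nu}(x)$ by $(\tfrac12 x)^{\mu+1}$ and applying the recurrence $u\Gamma(u)=\Gamma(u+1)$ to the prefactor in (\ref{bdefn}), I would first establish
\begin{equation*}
b_{\mu,\nu}(x)=\frac{\mu-\nu+1}{2}\,\Bigg(\sum_{k=0}^{\infty}\frac{(\tfrac12 x)^{2k}}{\big(\tfrac{\mu-\nu+3}{2}\big)_k\big(\tfrac{\mu+\nu+3}{2}\big)_k}\Bigg)^{-1}=:\frac{\mu-\nu+1}{2}\,S(x)^{-1},
\end{equation*}
where $(\cdot)_k$ denotes the Pochhammer symbol. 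The standing hypothesis $|\nu+1|<\mu+2$ is equivalent to $\tfrac{\mu-\nu+1}{2}>0$ and $\tfrac{\mu+\nu+3}{2}>0$, so every Pochhammer factor is positive and $S(x)$ is an even power series in $x$ with positive coefficients and constant term $1$. This representation is the engine for parts (i)--(iii).

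For the small-$x$ asymptotic (\ref{bnu0}) I would expand $S(x)=1+\frac{x^2}{(\mu+3)^2-\nu^2}+O(x^4)$, using $\big(\tfrac{\mu-\nu+3}{2}\big)\big(\tfrac{\mu+\nu+3}{2}\big)=\tfrac14\big((\mu+3)^2-\nu^2\big)$, and invert. The large-$x$ asymptotic (\ref{bnu1}) I would instead read off directly from (\ref{bdefn}) by inserting the leading term $\tilde t_{\mu,\nu}(x)\sim\mathrm{e}^x/\sqrt{2\pi x}$ of (\ref{linfty}) and simplifying $\sqrt{2\pi}/2^{\mu+1}=\sqrt{\pi}/2^{\mu+1/2}$. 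Part (iii) is then immediate: $S$ is strictly increasing on $(0,\infty)$, so $b_{\mu,\nu}=\tfrac{\mu-\nu+1}{2}\,S^{-1}$ is strictly decreasing, and $b_{\mu,\nu}(x)<b_{\mu,\nu}(0^+)=\tfrac{\mu-\nu+1}{2}$ follows from (\ref{bnu0}). The sharper bound (\ref{bcrude2}) comes from discarding the nonnegative tail $k\ge2$ of $S(x)$, giving $S(x)>1+\frac{x^2}{(\mu+3)^2-\nu^2}$, which I invert.

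Part (ii) is the only assertion needing a genuine monotonicity argument rather than an identity, and I expect it to be the main obstacle. Setting $a=\tfrac{\mu-\nu+1}{2}$ and $c=\tfrac{\mu+\nu+3}{2}$, I would work with the reciprocal $1/b_{\mu,\nu}(x)=\tfrac1a\sum_{k\ge0}(\tfrac12 x)^{2k}/[(a+1)_k(c)_k]$ and show it is decreasing in $\mu$ for fixed $\nu$ and $x$. On the stated interval $\mu\in(\max\{\nu-1,-\nu-3\},\infty)$ both $a$ and $c$ are positive and strictly increasing in $\mu$; hence $1/a$ is decreasing, and for each $k\ge1$ every factor of $(a+1)_k(c)_k$ is positive and increasing, so $1/[(a+1)_k(c)_k]$ is decreasing. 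Each summand is therefore a product of positive decreasing functions of $\mu$, the sum inherits this, and so $1/b_{\mu,\nu}(x)$ decreases while $b_{\mu,\nu}(x)$ increases. The point requiring care is that $\max\{\nu-1,-\nu-3\}$ is precisely the threshold at which $a$ or $c$ vanishes, which is what keeps the term-by-term monotonicity valid on the whole stated range.

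Finally, (iv) is a direct corollary of Proposition \ref{propint}. Inserting the upper bound (\ref{sinhineq}) for $\tilde t_{\mu,\nu}(x)$ into (\ref{bdefn}) reverses the inequality, since $b_{\mu,\nu}\propto 1/\tilde t_{\mu,\nu}$; cancelling the powers of $x$ and $2$ and using $\Gamma\big(\tfrac{\mu-\nu+3}{2}\big)/\Gamma\big(\tfrac{\mu-\nu+1}{2}\big)=\tfrac{\mu-\nu+1}{2}$ yields exactly $\tfrac{\mu-\nu+1}{2}\,x\,\mathrm{csch}(x)$, which is (\ref{star5}). The reversed inequality and the equality case $\mu=\nu=-\tfrac12$ are inherited directly from the proposition; I would only need to check that the lemma's hypotheses imply its side conditions $-3<\mu$ and $|\nu|<\mu+3$, the latter because $|\nu|\le|\nu+1|+1<\mu+3$.
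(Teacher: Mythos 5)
Your proposal is correct and follows essentially the same route as the paper: your normalised series $S(x)^{-1}$ is exactly the paper's key formula (\ref{bdown}), from which parts (i)--(iii) follow by the same expansions, term-by-term monotonicity, and truncation argument, and part (iv) is deduced directly from Proposition \ref{propint} in both cases. The only difference is that you spell out the details (the positivity threshold in (ii), the side-condition check $|\nu|\le|\nu+1|+1<\mu+3$ in (iv)) that the paper leaves implicit.
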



\begin{proof}(i) Combine (\ref{ttend0}) and (\ref{linfty}) with (\ref{bdefn}). 

\vspace{2mm}

\noindent (ii) A short calculation shows that
\begin{equation}\label{bdown}\frac{1}{b_{\mu,\nu}(x)}=\frac{2}{\mu-\nu+1}\sum_{k=0}^\infty\frac{\big(\frac{1}{2}x\big)^{2k}}{\big(\frac{\mu-\nu+3}{2}\big)_k\big(\frac{\mu+\nu+3}{2}\big)_k},
\end{equation}
where $(a)_k=a(a+1)\cdots(a+k-1)$ is the Pochhamer symbol. The assertion now follows.

\vspace{2mm} 

\noindent (iii) It is clear from (\ref{bdown}) that $b_{\mu,\nu}(x)$ is a decreasing function of $x$ in $(0,\infty)$ with $b_{\mu,\nu}(0^+)=\frac{\mu-\nu+1}{2}$. Inequality (\ref{bcrude2}) follows from truncating the series expansion of $\tilde{t}_{\mu,\nu}(x)$ at the second term.

\vspace{2mm}

\noindent (iv) This is immediate from Proposition \ref{propint}.
\end{proof}

\subsection{Bounding $t_{\mu,\nu}(x)/t_{\mu,-1,\nu-1}(x)$ via bounds for $I_\nu(x)/I_{\nu-1}(x)$}\label{sec2.2}

The following theorem gives a two-sided inequality for $\tilde{t}_{\mu,\nu}(x)/\tilde{t}_{\mu-1,\nu-1}(x)$ in terms of the ratio $I_\nu(x)/I_{\nu-1}(x)$, which constitutes the main result of this paper.  The importance of the result is not only due to the fact that one is are able to immediately exploit the literature on bounds for $I_\nu(x)/I_{\nu-1}(x)$ to bound $\tilde{t}_{\mu,\nu}(x)/\tilde{t}_{\mu-1,\nu-1}(x)$, but because, as we shall see in Section \ref{sec3}, the inequality will be used to obtain bounds for the condition numbers $x\tilde{t}_{\mu,\nu}'(x)/\tilde{t}_{\mu,\nu}(x)$, the ratio $\tilde{t}_{\mu,\nu}(x)/\tilde{t}_{\mu,\nu}(y)$ and the function $\tilde{t}_{\mu,\nu}(x)$ itself. 

The theorem generalises Theorem 2.2 of \cite{gaunt ineq5}, which gives analogous results for the modified Struve function $\mathbf{L}_\nu(x)$.  The method of proof of \cite{gaunt ineq5}, which is based on integral representations of $\mathbf{L}_\nu(x)$ and $I_\nu(x)$, does not easily generalise, owing to the fact that the corresponding integral representation of $\tilde{t}_{\mu,\nu}(x)$ is significantly more complicated than that of $\mathbf{L}_{,\nu}(x)$ (see \cite{s36}).  We are, however, able to provide an alternative simple proof, which has the advantage of extending the range of validity of the bounds of \cite{gaunt ineq5}; see Corollary \ref{corde} and Remark \ref{remde}.   




\begin{theorem}\label{thmil}(i) For $x>0$,
\begin{align}\label{bounddo}I_{\nu}(x) \tilde{t}_{\mu-1,\nu-1}(x)-I_{\nu-1}(x) \tilde{t}_{\mu,\nu}(x)>0,
\end{align}
which holds if $\mu>-1$, $\nu\geq-1$, $|\nu|<\mu+1$, or $\mu>-1$, $\nu>-1$, $-\mu-1\leq\nu<\mu+1$.  We have equality in (\ref{bounddo}) if $\mu-\nu=-1$, or $\nu=-1$, $\mu=0$ (for which $\mu+\nu=-1$).

Also, for $x>0$,
\begin{align}\label{bound456}I_{\nu}(x) \tilde{t}_{\mu-1,\nu-1}(x)-I_{\nu-1}(x) \tilde{t}_{\mu,\nu}(x)& < a_{\mu,\nu}(x)I_{\nu}(x),\\
\label{bound789}I_{\nu}(x) \tilde{t}_{\mu-1,\nu-1}(x)-I_{\nu-1}(x) \tilde{t}_{\mu,\nu}(x)&< a_{\mu-1,\nu-1}(x)I_{\nu-1}(x),
\end{align}
where $a_{\mu,\nu}(x)=\frac{(x/2)^\mu}{\Gamma(\frac{\mu-\nu+1}{2})\Gamma(\frac{\mu+\nu+3}{2})}$ ($a_{\mu,\nu}(x)=0$ if $\mu-\nu=-1$ or $\mu+\nu=-3$).  Inequality (\ref{bound456}) holds for $\mu>-2$, $\nu\geq-2$, $|\nu+1|<\mu+2$, or $\mu>-2$, $\nu>-2$, $-\mu-2\leq\nu+1<\mu+2$, and we have equality if $\mu-\nu=-1$, or $\mu=-1$, $\nu=-2$.  Inequality (\ref{bound789}) holds for $\mu>0$, $\nu\geq0$, $|\nu-1|<\mu$, or $\mu>0$, $\nu>0$, $-\mu\leq\nu-1<\mu$, with equality if $\mu-\nu=-1$, or $\mu=1$, $\nu=0$.




\vspace{2mm}

\noindent (ii) For $x>0$,
\begin{equation}\label{firstcor1}\bigg(\frac{I_{\nu-1}(x)}{I_\nu(x)}+\frac{2b_{\mu,\nu}(x)}{x}\bigg)^{-1}<\frac{\tilde{t}_{\mu,\nu}(x)}{\tilde{t}_{\mu-1,\nu-1}(x)}<\frac{I_{\nu}(x)}{I_{\nu-1}(x)}, 
\end{equation}
where both the lower and upper bounds hold for $\mu>-1$, $0\leq\nu<\mu+1$.  We have equality in the upper bound if $\mu-\nu=-1$, $\nu\geq0$.  
\end{theorem}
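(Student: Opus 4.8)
The plan is to derive all of part (i) from the single positivity statement (\ref{bounddo}) by combining the recurrences (\ref{struveid1}) and (\ref{123e}) with shifts of the parameters, and then to read off part (ii) as an algebraic rearrangement of (\ref{bounddo}) and (\ref{bound456}). Throughout I write $c_{\mu,\nu}=2^{\mu-1}\Gamma\big(\frac{\mu-\nu+1}{2}\big)\Gamma\big(\frac{\mu+\nu+1}{2}\big)$ and $D_{\mu,\nu}(x)=I_{\nu}(x)\tilde{t}_{\mu-1,\nu-1}(x)-I_{\nu-1}(x)\tilde{t}_{\mu,\nu}(x)$, so that (\ref{bounddo}) is just $D_{\mu,\nu}(x)>0$. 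The core step is to prove this. Differentiating the indefinite integral formula (\ref{lommelint2}) gives $\frac{\mathrm{d}}{\mathrm{d}x}\big[c_{\mu,\nu}xD_{\mu,\nu}(x)\big]=x^\mu I_\nu(x)$, so $c_{\mu,\nu}xD_{\mu,\nu}(x)$ and $\int_0^x t^\mu I_\nu(t)\,\mathrm{d}t$ differ by a constant. The hypotheses $|\nu|<\mu+1$ force $\mu+\nu+1>0$ and $\mu-\nu+1>0$, so that $c_{\mu,\nu}>0$ and the integral converges at the origin; since the series for $xD_{\mu,\nu}(x)$ starts at order $x^{\mu+\nu+1}$, both expressions vanish as $x\downarrow0$ and the constant is zero. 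Hence $c_{\mu,\nu}xD_{\mu,\nu}(x)=\int_0^x t^\mu I_\nu(t)\,\mathrm{d}t>0$ (the integrand being positive for $\nu\ge-1$), giving (\ref{bounddo}). The equality cases I would check directly from the special value $\tilde{t}_{\nu-2n-1,\nu}(x)=I_\nu(x)$ together with $\tilde{t}_{\mu,-\nu}(x)=\tilde{t}_{\mu,\nu}(x)$; for instance when $\mu-\nu=-1$ these force $\tilde{t}_{\mu,\nu}=I_\nu$ and $\tilde{t}_{\mu-1,\nu-1}=I_{\nu-1}$, whence $D_{\mu,\nu}\equiv0$.

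The two upper bounds then follow by shifting parameters rather than by a fresh estimate. For (\ref{bound456}) I would substitute $\tilde{t}_{\mu-1,\nu-1}-a_{\mu,\nu}=\tilde{t}_{\mu+1,\nu+1}+\frac{2\nu}{x}\tilde{t}_{\mu,\nu}$ (a rearrangement of (\ref{struveid1})) and then use (\ref{123e}); the difference $D_{\mu,\nu}(x)-a_{\mu,\nu}(x)I_\nu(x)$ collapses to $I_\nu\tilde{t}_{\mu+1,\nu+1}-I_{\nu+1}\tilde{t}_{\mu,\nu}=-D_{\mu+1,\nu+1}(x)$, so (\ref{bound456}) is exactly the positivity statement at $(\mu+1,\nu+1)$. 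Crucially, the stated range for (\ref{bound456}) is precisely the range of (\ref{bounddo}) translated by $(+1,+1)$, so the core step applies verbatim and the troublesome possibility $\mu+\nu+1\le 0$ never arises (the relevant sum is $\mu+\nu+3>0$). In the same way, applying (\ref{struveid1}) at $(\mu-1,\nu-1)$ and (\ref{123e}) at $\nu-1$ reduces (\ref{bound789}) to $D_{\mu-1,\nu-1}(x)>0$, whose range matches the hypotheses $|\nu-1|<\mu$ after translation by $(-1,-1)$.

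Part (ii) is then immediate. Under $\mu>-1$, $0\le\nu<\mu+1$ the functions $I_{\nu-1}(x)$, $I_\nu(x)$, $\tilde{t}_{\mu-1,\nu-1}(x)$ and $\tilde{t}_{\mu,\nu}(x)$ are all positive on $(0,\infty)$, so dividing (\ref{bounddo}) by $I_{\nu-1}(x)\tilde{t}_{\mu-1,\nu-1}(x)$ gives the upper bound of (\ref{firstcor1}) at once. For the lower bound I would take reciprocals and invoke the definition (\ref{bdefn}) in the form $\frac{2b_{\mu,\nu}(x)}{x}=\frac{a_{\mu,\nu}(x)}{\tilde{t}_{\mu,\nu}(x)}$; clearing denominators turns the claim into $a_{\mu,\nu}(x)I_\nu(x)>D_{\mu,\nu}(x)$, which is precisely (\ref{bound456}). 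One only has to confirm that the box $0\le\nu<\mu+1$ lies inside the validity ranges of (\ref{bounddo}) and (\ref{bound456}), which it does.

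The main obstacle I anticipate is not any single computation but the bookkeeping of the parameter ranges and the vanishing of the constant of integration in the core step: the identity $c_{\mu,\nu}xD_{\mu,\nu}(x)=\int_0^x t^\mu I_\nu(t)\,\mathrm{d}t$ rests on $\mu+\nu+1>0$ so that both sides vanish at the origin, and the reductions of (\ref{bound456}) and (\ref{bound789}) must be accompanied by a check that the translated hypotheses reproduce exactly the (two alternative) triples of conditions quoted in the statement, together with the separate verification of the degenerate equality points via $\tilde{t}_{\nu-2n-1,\nu}=I_\nu$ and $\tilde{t}_{\mu,-\nu}=\tilde{t}_{\mu,\nu}$. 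The algebra with the recurrences is routine; the delicate point is making sure each shifted positivity statement is valid on the nose over the stated region.
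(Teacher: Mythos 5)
Your architecture coincides with the paper's: prove the positivity of $D_{\mu,\nu}(x)=I_\nu(x)\tilde{t}_{\mu-1,\nu-1}(x)-I_{\nu-1}(x)\tilde{t}_{\mu,\nu}(x)$ via the integral formula (\ref{lommelint2}), reduce (\ref{bound456}) and (\ref{bound789}) to the shifted statements $D_{\mu+1,\nu+1}(x)>0$ and $D_{\mu-1,\nu-1}(x)>0$ using (\ref{struveid1}) and (\ref{123e}), and obtain part (ii) by rearranging (\ref{bounddo}) and (\ref{bound456}); those reductions and the rearrangement are correct, and part (ii) is unaffected by what follows since its range $0\leq\nu<\mu+1$ stays inside the good region. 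The gap is in the core step. Your integral argument needs $\mu+\nu+1>0$ (so that $\Gamma\big(\frac{\mu+\nu+1}{2}\big)$ is finite and $\int_0^x t^\mu I_\nu(t)\,\mathrm{d}t$ converges), and hence only proves (\ref{bounddo}) on the open region $|\nu|<\mu+1$. But the theorem's second alternative hypothesis, $\mu>-1$, $\nu>-1$, $-\mu-1\leq\nu<\mu+1$, adds precisely the boundary line $\nu=-\mu-1$ with $-1<\nu<0$, on which \emph{strict} inequality is still asserted. There the integrand behaves like a constant multiple of $t^{-1}$ at the origin, the integral diverges, and the identity $c_{\mu,\nu}xD_{\mu,\nu}(x)=\int_0^x t^\mu I_\nu(t)\,\mathrm{d}t$ is meaningless; nor is this one of the equality cases you can dispatch with the special values. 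The paper covers it by a separate argument: for $\mu+\nu=-1$, the reflection $\tilde{t}_{\mu,-\nu}(x)=\tilde{t}_{\mu,\nu}(x)$ and the special values $\tilde{t}_{\nu-2n-1,\nu}(x)=I_\nu(x)$ give $\tilde{t}_{\mu,\nu}(x)=I_{-\nu}(x)$ and $\tilde{t}_{\mu-1,\nu-1}(x)=I_{1-\nu}(x)$, whence
\begin{equation*}
D_{\mu,\nu}(x)=I_\nu(x)I_{1-\nu}(x)-I_{\nu-1}(x)I_{-\nu}(x)=-\frac{2\sin(\pi\nu)}{\pi x}>0, \quad -1<\nu<0,
\end{equation*}
the last identity following from $I_{-\nu}(x)-I_\nu(x)=\frac{2\sin(\pi\nu)}{\pi}K_\nu(x)$ and $I_\nu(x)K_{\nu+1}(x)+I_{\nu+1}(x)K_\nu(x)=\frac{1}{x}$.

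Moreover, your parenthetical claim that for (\ref{bound456}) the troublesome possibility $\mu+\nu+1\leq0$ ``never arises (the relevant sum is $\mu+\nu+3>0$)'' is false: the stated range for (\ref{bound456}) contains the boundary $\nu+1=-\mu-2$, i.e.\ $\mu+\nu+3=0$, which is exactly the translate of the problematic line; similarly (\ref{bound789}) is claimed at $\nu-1=-\mu$. So each of the three inequalities of part (i) needs the supplementary Bessel-function argument on its boundary line — equivalently, one should first prove (\ref{bounddo}) on the full closed range, including the line $\mu+\nu=-1$, after which your translation argument propagates it correctly. With that case added, your proof is complete and agrees with the paper's.
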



\begin{proof}(i) We focus on proving inequality (\ref{bounddo}); inequalities (\ref{bound456}) and (\ref{bound789}) follow from substituting the relations (\ref{struveid1}) and (\ref{123e}) into (\ref{bounddo}) and performing some basic manipulations.

Let us first prove inequality (\ref{bounddo}) under the assumption $\mu>-1$, $\nu\geq-1$, $|\nu|<\mu+1$.  For such $\mu$ and $\nu$, we can use the indefinite integral formula (\ref{lommelint2}) to obtain 
\begin{equation}\label{potd}\int_0^x u^\mu I_\nu(u)\,\mathrm{d}u=2^{\mu-1}\Gamma\big(\tfrac{\mu-\nu+1}{2}\big)\Gamma\big(\tfrac{\mu+\nu+1}{2}\big)x\big(I_\nu(x)\tilde{t}_{\mu-1,\nu-1}(x)-I_{\nu-1}(x)\tilde{t}_{\mu,\nu}(x)\big),
\end{equation}
where we used that $\lim_{x\downarrow0}xI_\nu(x)\tilde{t}_{\mu-1,\nu-1}(x)=\lim_{x\downarrow0}xI_{\nu-1}(x)\tilde{t}_{\mu,\nu}(x)=0$ (which can be seen from the series representations of $I_\nu(x)$ and $\tilde{t}_{\mu,\nu}(x)$).  Here, the assumptions $\mu+\nu>-1$ and $\nu\geq-1$ ensure that the integral exists and is in fact positive, since $I_{\nu}(x)>0$ for all $x>0$ if $\nu\geq-1$, and, as $x\downarrow0$, $I_\nu(x)=O(x^{\nu})$ for all $\nu>-1$ and $I_{-1}(x)=I_1(x)=O(x)$.  Furthermore, as $\mu-\nu>-1$ and $\mu+\nu>-1$, the gamma functions in (\ref{potd}) are positive, and we thus deduce inequality (\ref{bounddo}) from (\ref{potd}). 

We now prove that inequality (\ref{bounddo}) is also valid for $\mu>-1$, $\nu>-1$, $-\mu-1\leq\nu<\mu+1$.  Given what we have already shown, it suffices to consider the case $\nu>-1$, $-\mu-1=\nu<\mu+1$.  Note that $-1<\nu<0$.  In this case, as $\tilde{t}_{\mu,-\nu}(x)=\tilde{t}_{\mu,\nu}(x)$ and $\tilde{t}_{\nu-3,\nu}(x)=\tilde{t}_{\nu-1,\nu}(x)=I_\nu(x)$, we have
\begin{align}I_{\nu}(x) \tilde{t}_{\mu-1,\nu-1}(x)-I_{\nu-1}(x) \tilde{t}_{\mu,\nu}(x)&=I_{\nu}(x) \tilde{t}_{-\nu-2,\nu-1}(x)-I_{\nu-1}(x) \tilde{t}_{-\nu-1,\nu}(x) \nonumber \\
\label{onezero}&=I_\nu(x)I_{-(\nu-1)}(x)-I_{\nu-1}(x)I_{-\nu}(x)\\
&=-\frac{2\sin(\pi\nu)}{\pi x}>0. \nonumber
\end{align}
To obtain the final equality we used the standard formulas (see \cite{olver})
\begin{align*}I_{-\nu}(x)-I_\nu(x)=\frac{2\sin(\pi\nu)}{\pi}K_\nu(x), \quad I_\nu(x)K_{\nu+1}(x)+I_{\nu+1}(x)K_\nu(x)=\frac{1}{x},
\end{align*}
where $K_\nu(x)$ is the modified Bessel function of the second kind.

Finally, we consider the boundary cases in which there is equality in (\ref{bounddo}).  When $n$ is an integer, $I_{-n}(x)=I_n(x)$ and we can therefore see from (\ref{onezero}) that $I_{\nu}(x) \tilde{t}_{\mu-1,\nu-1}(x)-I_{\nu-1}(x) \tilde{t}_{\mu,\nu}(x)$ is equal to zero if $\mu=0$ and $\nu=-1$.  The final case to be considered is $\mu-\nu=-1$. Here, since $\tilde{t}_{\nu-1,\nu}(x)=I_\nu(x)$, we have 
\begin{align*}I_{\nu}(x) \tilde{t}_{\mu-1,\nu-1}(x)-I_{\nu-1}(x) \tilde{t}_{\mu,\nu}(x)&=I_{\nu}(x) \tilde{t}_{\nu-2,\nu-1}(x)-I_{\nu-1}(x) \tilde{t}_{\nu-1,\nu}(x) \\
&=I_\nu(x)I_{\nu-1}(x)-I_{\nu-1}(x)I_\nu(x)=0,
\end{align*}
which completes the proof of part (i).




\vspace{2mm}

\noindent (ii) The two-sided inequality follows from rearranging inequalities (\ref{bounddo}) and (\ref{bound456}) using the facts that $I_\nu(x)>0$ for all $x>0$ if $\nu\geq-1$, and $\tilde{t}_{\mu,\nu}(x)>0$ for all $x>0$ if $\mu-\nu\geq-3$ and $\mu+\nu\geq-3$ (this can be seen from the series representation of $\tilde{t}_{\mu,\nu}(x)$), together with the formula $b_{\mu,\nu}(x)=\frac{xa_{\mu,\nu}(x)}{2 \tilde{t}_{\mu,\nu}(x)}$.  The lower bound is valid for $\mu>-2$, $\nu\geq0$, $|\nu+1|<\mu+2$ and the upper bound is valid for $\mu>-1$, $\nu\geq0$, $|\nu|<\mu+1$, but both these constraints reduce to $\mu>-1$, $0\leq\nu<\mu+1$, as stated in the theorem.  
\end{proof}


\begin{corollary}\label{corde}Define $a_\nu(x):=a_{\nu,\nu}(x)=\frac{(x/2)^\nu}{\sqrt{\pi}\Gamma(\nu+\frac{3}{2})}$ (where $a_{-\frac{3}{2}}(x)=0$) and $b_\nu(x):=b_{\nu,\nu}(x)=\frac{xa_{\nu}(x)}{2 \mathbf{L}_{\nu}(x)}$.  Then:

\vspace{2mm}

\noindent (i) For $x>0$,
\begin{align}\label{qwert1}I_{\nu}(x)\mathbf{L}_{\nu-1}(x)-I_{\nu-1}(x)\mathbf{L}_{\nu}(x)&>0, \quad \nu\geq-\tfrac{1}{2}, \\
\label{qwert2}I_{\nu}(x)\mathbf{L}_{\nu-1}(x)-I_{\nu-1}(x)\mathbf{L}_{\nu}(x)& <a_\nu(x)I_{\nu}(x), \quad \nu\geq-\tfrac{3}{2},\\
\label{qwert3}I_{\nu}(x)\mathbf{L}_{\nu-1}(x)-I_{\nu-1}(x)\mathbf{L}_{\nu}(x)&<a_{\nu-1}(x)I_{\nu-1}(x), \quad \nu\geq\tfrac{1}{2}.
\end{align}

\vspace{2mm}

\noindent (ii) Let $\nu\geq0$.  Then, for $x>0$,
\begin{equation}\label{firstcor1z}\bigg(\frac{I_{\nu-1}(x)}{I_\nu(x)}+\frac{2b_\nu(x)}{x}\bigg)^{-1}<\frac{\mathbf{L}_{\nu}(x)}{\mathbf{L}_{\nu-1}(x)}<\frac{I_{\nu}(x)}{I_{\nu-1}(x)}.
\end{equation} 
\end{corollary}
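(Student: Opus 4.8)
The plan is to obtain every assertion as the special case $\mu=\nu$ of Theorem \ref{thmil}, exploiting the identity $\tilde{t}_{\nu,\nu}(x)=\mathbf{L}_\nu(x)$ recorded in Section \ref{seclom} (so also $\tilde{t}_{\nu-1,\nu-1}(x)=\mathbf{L}_{\nu-1}(x)$). First I would match up the auxiliary notation: setting $\mu=\nu$ in $a_{\mu,\nu}(x)=\frac{(x/2)^\mu}{\Gamma(\frac{\mu-\nu+1}{2})\Gamma(\frac{\mu+\nu+3}{2})}$ gives $a_{\nu,\nu}(x)=\frac{(x/2)^\nu}{\Gamma(1/2)\Gamma(\nu+3/2)}=a_\nu(x)$ (using $\Gamma(1/2)=\sqrt{\pi}$), and likewise $a_{\nu-1,\nu-1}(x)=a_{\nu-1}(x)$, while $b_{\nu,\nu}(x)=\frac{xa_{\nu,\nu}(x)}{2\tilde{t}_{\nu,\nu}(x)}=\frac{xa_\nu(x)}{2\mathbf{L}_\nu(x)}=b_\nu(x)$. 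With these substitutions, inequalities (\ref{bounddo}), (\ref{bound456}) and (\ref{bound789}) become verbatim (\ref{qwert1}), (\ref{qwert2}) and (\ref{qwert3}), and (\ref{firstcor1}) becomes (\ref{firstcor1z}). It then only remains to check that the ranges of validity in Theorem \ref{thmil} specialise to those claimed.

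For part (i) I would treat the three inequalities in turn. For (\ref{qwert1}), substituting $\mu=\nu$ into the two disjunctive hypotheses of (\ref{bounddo}) reduces the first branch to $|\nu|<\nu+1$ (valid for $\nu>-\tfrac{1}{2}$) and the second branch to $-\nu-1\leq\nu$ (exactly $\nu\geq-\tfrac{1}{2}$); the union gives $\nu\geq-\tfrac{1}{2}$. The same substitution in (\ref{bound456}) turns $|\nu+1|<\mu+2$ and $-\mu-2\leq\nu+1$ into $|\nu+1|<\nu+2$ (valid for $\nu>-\tfrac{3}{2}$) and $\nu\geq-\tfrac{3}{2}$, yielding $\nu\geq-\tfrac{3}{2}$ for (\ref{qwert2}); and in (\ref{bound789}) the conditions $|\nu-1|<\mu$ and $-\mu\leq\nu-1$ become $|\nu-1|<\nu$ (valid for $\nu>\tfrac{1}{2}$) and $\nu\geq\tfrac{1}{2}$, yielding $\nu\geq\tfrac{1}{2}$ for (\ref{qwert3}).

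For part (ii), setting $\mu=\nu$ in the hypothesis $\mu>-1$, $0\leq\nu<\mu+1$ of Theorem \ref{thmil}(ii) leaves $\nu\geq0$ (the constraints $\nu>-1$ and $\nu<\nu+1$ being automatic), so (\ref{firstcor1z}) holds for $\nu\geq0$, as stated.

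I expect no serious obstacle: the argument is purely bookkeeping. The only point demanding care is the disjunctive ``or'' structure of the ranges in Theorem \ref{thmil}, where one must verify that the boundary values $\nu=-\tfrac{1}{2}$, $\nu=-\tfrac{3}{2}$ and $\nu=\tfrac{1}{2}$ are picked up by the closed inequalities ($-\mu-1\leq\nu$, $-\mu-2\leq\nu+1$, $-\mu\leq\nu-1$) in the second branch rather than the strict inequality in the first branch; overlooking this would forfeit precisely these endpoints. I would also note for completeness that $a_{-3/2}(x)=0$, consistent with $1/\Gamma(0)=0$, so that (\ref{qwert2}) remains meaningful at its left endpoint.
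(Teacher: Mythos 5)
Your proposal is correct and is exactly the paper's proof: the paper simply sets $\mu=\nu$ in Theorem \ref{thmil} and uses $\tilde{t}_{\nu,\nu}(x)=\mathbf{L}_\nu(x)$, leaving the range bookkeeping implicit. Your careful verification that the closed endpoints $\nu=-\tfrac{1}{2},-\tfrac{3}{2},\tfrac{1}{2}$ come from the second (non-strict) branches of the disjunctive hypotheses is accurate and fills in the details the paper omits.
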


\begin{proof}Set $\mu=\nu$ in Theorem \ref{thmil} and use that $\tilde{t}_{\nu,\nu}(x)=\mathbf{L}_\nu(x)$.   
\end{proof}

\begin{remark}\label{remde}Corollary \ref{corde} extends the range of validity of the bounds of Theorem 2.1 of \cite{gaunt ineq5}.  One can write down analogous corollaries to Theorems \ref{thmcond} and \ref{thmalm} below, which extend the range of validity of some of the bounds of Theorems 3.1 and 3.3 of \cite{gaunt ineq5} that bound the condition numbers $x\mathbf{L}_{\nu}'(x)/\mathbf{L}_{\nu}(x)$, and the ratio $\mathbf{L}_{\nu}(x)/\mathbf{L}_{\nu}(y)$ and modified Struve function $\mathbf{L}_\nu(x)$, respectively. For space reasons, we omit these corollaries, and just record that the range of validity of inequalities (3.34) and (3.43) of \cite{gaunt ineq5} are extended in this paper.     On the other hand, for $\mu=\nu$ the range of validity of inequalities (\ref{upper1}), (\ref{star10}), (\ref{thmu9}) and (\ref{near33}) are the same as those for the corresponding inequalities of \cite{gaunt ineq5}.


The range of validity for all inequalities of Corollary \ref{corde} is best possible.  To see this, we first recall that $\mathbf{M}_\nu(x)=\mathbf{L}_\nu(x)-I_\nu(x)$ has the asymptotic behaviour \cite[11.6.2]{olver}
\begin{equation*}\mathbf{M}_\nu(x)\sim-\frac{(\frac{1}{2}x)^{\nu-1}}{\sqrt{\pi}\Gamma(\nu+\frac{1}{2})}, \quad \nu\not=-\tfrac{1}{2},-\tfrac{3}{2},-\tfrac{5}{2},\ldots, \: x\rightarrow\infty.
\end{equation*}
Together with (\ref{iinfty}) we then have that, for $\nu\not=-\frac{1}{2},-\frac{3}{2},-\frac{5}{2}\ldots$,
\begin{align*}I_{\nu}(x)\mathbf{L}_{\nu-1}(x)-I_{\nu-1}(x)\mathbf{L}_{\nu}(x)&=I_{\nu}(x)\mathbf{M}_{\nu-1}(x)-I_{\nu-1}(x)\mathbf{M}_{\nu}(x) \\
&\sim\frac{\mathrm{e}^x}{\sqrt{2\pi x}}\cdot\frac{(\frac{1}{2}x)^{\nu-1}}{\sqrt{\pi}\Gamma(\nu+\frac{1}{2})},
\end{align*}
as $x\rightarrow\infty$.  Now, for $\nu=-\frac{1}{2}-\epsilon$, where $0<\epsilon<1$, we have $\Gamma(\nu+\frac{1}{2})<0$.  Thus, the range of validity of inequality (\ref{qwert1}) is best possible.  Similar considerations show that the range of validity of inequalities (\ref{qwert2}) and (\ref{qwert3}) are best possible.  Finally, the range of validity of (\ref{firstcor1z}) cannot be extended, because $I_{\nu}(x)$ takes negative values for $\nu\in(-2,-1)$.
\end{remark}

\begin{remark}Lower and upper bounds for integrals of the form $\int_0^xu^\mu I_\nu(u)\,\mathrm{d}u$ are given in \cite{gaunt ineq1, gaunt ineq3, gaunt ineq6}.  These bounds are given in terms of the modified Bessel function of the first kind and a number of both the lower and upper bounds are tight in the limits $x\downarrow0$ and $x\rightarrow\infty$.  It is therefore possible to obtain more accurate bounds for the quantities $I_{\nu}(x) \tilde{t}_{\mu-1,\nu-1}(x)-I_{\nu-1}(x) \tilde{t}_{\mu,\nu}(x)$ and $\tilde{t}_{\mu,\nu}(x)/\tilde{t}_{\mu,\nu}(x)$ than those given in Theorem \ref{thmil}.  However, the price one pays for this is that the resulting bounds are more complicated and consequently more difficult to work with than the two-sided inequality (\ref{firstcor1}), which is used throughout this paper.  

\end{remark}

\begin{remark}\label{remaa}Let $l_{\mu,\nu}^a(x)$, $u_{\mu,\nu}^a(x)$ denote the lower and upper bounds of (\ref{firstcor1}) and let $h_{\mu,\nu}(x)=\tilde{t}_{\mu,\nu}(x)/\tilde{t}_{\mu-1,\nu-1}(x)$.  From the asymptotic formulas (\ref{iinfty}) and (\ref{bnu1}), we have
\begin{equation*}\frac{u_{\mu,\nu}^a(x)}{l_{\mu,\nu}^a(x)}-1=\frac{2b_{\mu,\nu}(x)}{x}\frac{I_\nu(x)}{I_{\nu-1}(x)}=O(x^{\mu+1/2}\mathrm{e}^{-x}), \quad x\rightarrow\infty,
\end{equation*}
and so the double inequality (\ref{firstcor1}) is tight as $x\rightarrow\infty$.  From (\ref{itend0}) and (\ref{ttend0}), we have that in the limit $x\downarrow0$ the relative error in approximating $h_{\mu,\nu}(x)$ by $u_{\mu,\nu}^a(x)$ is $\frac{\mu-\nu+1}{2\nu}$. This error blows up as $\nu\downarrow0$, but the bound is tight when $\nu\rightarrow\infty$ and $\mu/\nu\rightarrow1$ simultaneously. The relative error is equal to 0 if $\mu-\nu=-1$, as we know must be so from the final assertion of Theorem \ref{thmil}. The lower bound $l_{\mu,\nu}^a(x)$ is tight as $x\downarrow0$.  To gain further insight,  we use (\ref{itend0}), (\ref{ttend0}) and (\ref{bnu0}) to obtain that, as $x\downarrow0$,
\begin{align}l_{\mu,\nu}^a(x)&\sim\frac{x}{\mu+\nu+1}-\frac{((\mu-\nu)^2+4\mu+7)x^3}{2(\nu+1)(\mu+\nu+1)^2((\mu+3)^2-\nu^2)}, \nonumber\\
\label{huvzero} h_{\mu,\nu}(x)&\sim\frac{x}{\mu+\nu+1}-\frac{2x^3}{(\mu+\nu+1)^2((\mu+3)^2-\nu^2)}.
\end{align}
The second term in the expansion of $l_{\mu,\nu}^a(x)$ approaches that of $h_{\mu,\nu}(x)$ when $\nu\rightarrow\infty$ and $(\mu-\nu)^2/\nu\rightarrow0$ simultaneously.  

Numerical results, obtained using \emph{Mathematica}, are given in Tables \ref{table1} and \ref{table11}.  We consider three cases of $\mu-\nu=k$, $k=-0.5,2,5$, and in each case let $\nu=0,1,2.5,5,10$.  (Tables for the case $\mu=\nu$ are given in \cite{gaunt ineq5}.)  In agreement with the above analysis, we see that, as a result of the exponential decay of $b_{\mu,\nu}(x)$, the bounds are very accurate for larger values of $x$, particularly for smaller values of $\mu$, whilst both bounds improve for `small' $x$ as $\nu$ increases.  The bounds are most accurate in the case $\mu-\nu=-0.5$.  
\end{remark}








\begin{table}[h]
\centering
\caption{\footnotesize{Relative error in approximating $ \tilde{t}_{\mu,\nu}(x)/ \tilde{t}_{\mu-1,\nu-1}(x)$ by the lower bound in (\ref{firstcor1}).}}
\label{table1}
{\scriptsize
\begin{tabular}{|c|rrrrrrrr|}
\hline
 \backslashbox{$(\mu,\nu)$}{$x$}      &    0.5 &    1 &    2.5 &    5 &    7.5 & 10 & 15 & 25  \\
 \hline
(-0.5,0)  & 3.8e$-$2 & 8.3e$-$2 & 5.8e$-$2 & 6.7e$-$3 & 5.8e$-$4 & 4.8e$-$5 & 3.3e$-$7 & 1.5e$-$11 \\
(0.5,1)  & 2.6e$-$3 & 8.8e$-$3 & 2.0e$-$2 & 7.3e$-$3 & 1.2e$-$3 & 1.5e$-$4 & 1.6e$-$6 & 1.3e$-10$ \\
(2,2.5)  & 4.2e$-$4 & 1.6e$-$3 & 5.9e$-$3 & 5.3e$-$3 & 1.7e$-$3 & 3.4e$-$4 & 7.5e$-$6 & 1.4e$-$9 \\
(4.5,5)  & 7.8e$-$5 & 3.0e$-$4 & 1.5e$-$3 & 2.5e$-$3 & 1.5e$-$3 & 5.7e$-$4 & 3.2e$-$5 & 2.0e$-$8 \\
(9.5,10) & 1.2e$-$5 & 4.8e$-$5 & 2.7e$-$4 & 6.9e$-$4   &  7.7e$-$4 & 5.3e$-$4 & 1.0e$-$4 & 4.3e$-$7 \\  
  \hline
(2,0)  & 2.3e$-$2 & 7.5e$-$2 & 1.8e$-$1 & 1.1e$-$1 & 3.1e$-$2 & 5.9e$-$3 & 1.2e$-$4 & 2.1e$-$8 \\
(3,1)  & 5.2e$-$3 & 1.9e$-$2 & 7.1e$-$2 & 7.7e$-$2 & 3.4e$-$2 & 9.1e$-$3 & 3.3e$-$4 & 9.4e$-$8 \\
(4.5,2.5)  & 1.3e$-$3 & 5.1e$-$3 & 2.4e$-$2 & 4.2e$-$2 & 2.9e$-$2 & 1.2e$-$2 & 7.6e$-$4 & 5.3e$-$7 \\
(7,5)  & 3.2e$-$4 & 1.2e$-$3 & 6.8e$-$3 & 1.7e$-$2 & 1.8e$-$2 & 1.2e$-$2 & 1.8e$-$3 & 4.1e$-$6 \\
(12,10) & 5.9e$-$5 & 2.3e$-$4 & 1.4e$-$3 & 4.3e$-$3 & 6.6e$-$3 & 6.7e$-$3 & 2.8e$-$3 & 4.4e$-$5 \\  
  \hline
(5,0)  & 1.5e$-$2 & 5.1e$-$2 & 1.6e$-$1 & 1.9e$-$1 & 1.3e$-$1 & 5.6e$-$2 & 4.2e$-$3 & 3.6e$-$6 \\
(6,1)  & 4.6e$-$3 & 1.7e$-$2 & 7.4e$-$2 & 1.2e$-$1 & 1.0e$-$1 & 5.8e$-$2 & 6.7e$-$3 & 1.0e$-$5 \\
(7.5,2.5)  & 1.5e$-$3 & 5.8e$-$3 & 3.0e$-$2 & 6.8e$-$2 & 7.3e$-$2 & 5.2e$-$2 & 1.0e$-$2 & 3.5e$-$5 \\
(10,5)  & 4.3e$-$4 & 1.7e$-$3 & 9.7e$-$3 & 2.8e$-$2 & 3.9e$-$2 & 3.7e$-$2 & 1.4e$-$2 & 1.5e$-$4 \\
(15,10) & 9.3e$-$5 & 3.7e$-$4 & 2.2e$-$3 & 7.6e$-$3 & 1.3e$-$2 & 1.6e$-$2 & 1.3e$-$2 & 7.7e$-$4 \\  
  \hline
\end{tabular}}
\end{table}

\begin{table}[h]
\centering
\caption{\footnotesize{Relative error in approximating $ \tilde{t}_{\mu,\nu}(x)/ \tilde{t}_{\mu-1,\nu-1}(x)$ by the upper bound in (\ref{firstcor1}).}}
\label{table11}
{\scriptsize
\begin{tabular}{|c|rrrrrrrr|}
\hline
 \backslashbox{$(\mu,\nu)$}{$x$}      &        0.5 &    1 &    2.5 &    5 &    7.5 & 10 & 15 & 25  \\
 \hline
(-0.5,0) & 3.8e$+$0 & 8.0e$-$1 & 4.7e$-$2 & 1.2e$-$3 & 5.1e$-$5 & 2.9e$-$6 & 1.2e$-$8 & 3.2e$-$13 \\
(0.5,1)  & 2.3e$-$1 & 1.9e$-$1 & 6.8e$-$2 & 6.8e$-$3 & 5.8e$-$4 & 4.8e$-$5 & 3.3e$-$7 & 1.5e$-$11 \\
(2,2.5)  & 9.8e$-$2 & 9.1e$-$2 & 5.6e$-$2 & 1.4e$-$2 & 2.2e$-$3 & 2.8e$-$4 & 3.6e$-$6 & 3.6e$-$10 \\
(4.5,5)  & 4.9e$-$2 & 4.8e$-$2 & 3.8e$-$2 & 1.7e$-$2 & 5.0e$-$3 & 1.1e$-$3 & 3.4e$-$5 & 1.1e$-$8 \\
(9.5,10)  & 2.5e$-$2 & 2.5$-$2 & 2.2e$-$2 & 1.5e$-$2 & 7.4e$-$2 & 3.0e$-$3 & 2.8e$-$4 & 5.6e$-$7 \\  
  \hline
(2,0)  & 2.4e$+$1 & 5.9e$+$0 & 8.3e$-$1 & 1.1e$-$1 & 1.6e$-$2 & 2.1e$-$3 & 2.6e$-$5 & 2.6e$-$9 \\
(3,1)  & 1.4e$+$0 & 1.3e$+$0 & 6.4e$-$1 & 1.6e$-$1 & 3.4e$-$2 & 5.9e$-$3 & 1.2e$-$4 & 2.1e$-$8 \\
(4.5,2.5)  & 5.9e$-$1 & 5.6e$-$1 & 4.2e$-$1 & 1.8e$-$1 & 5.8e$-$2 & 1.5e$-$2 & 5.3e$-$4 & 2.0e$-$7 \\
(7,5)  & 3.0e$-$1 & 2.9e$-$1 & 2.5e$-$1 & 1.6e$-$1 & 7.7e$-$2 & 3.0e$-$2 & 2.4e$-$3 & 2.8e$-$6 \\
(12,10)  & 1.5e$-$1 & 1.5e$-$1 & 1.4e$-$1 & 1.1e$-$1 & 7.6e$-$2  & 4.4e$-$2  & 9.1e$-$3 &  6.3e$-$5 \\  
  \hline
(5,0)  & 4.9e$+$1 & 1.3e$+$1 & 2.2e$+$0 & 5.4e$-$1 & 1.6e$-$1 & 4.2e$-$2 & 1.8e$-$3 & 9.1e$-$7 \\
(6,1)  & 2.9e$+$0 & 2.6e$+$0 & 1.5e$+$0 & 5.5e$-$1 & 2.0e$-$1 & 6.6e$-$2 & 4.2e$-$3 & 3.6e$-$6 \\
(7.5,2.5)  & 1.2e$+$0 & 1.1e$+$0 & 9.1e$-$1 & 5.0e$-$1 & 2.4e$-$1 & 9.8e$-$2 & 1.0e$-$2 & 1.8e$-$5 \\
(10,5)  & 6.0e$-$1 & 5.9e$-$1 & 5.3e$-$1 & 3.8e$-$1 & 2.4e$-$1 & 1.3e$-$1 & 2.4e$-$2 & 1.3e$-$4 \\
(15,10)  & 3.0e$-$1 & 3.0e$-$1 & 2.8e$-$1 & 2.4e$-$1 & 1.9e$-$1 & 1.3e$-$1 & 4.8e$-$2 & 1.3e$-$3 \\  
  \hline
\end{tabular}}
\end{table}

We now note two applications of the double inequality (\ref{firstcor1}) to deduce bounds for the ratio $\tilde{t}_{\mu,\nu}(x)/\tilde{t}_{\mu-1,\nu-1}(x)$ from existing bounds for $I_\nu(x)/I_{\nu-1}(x)$. It was shown in \cite{ifantis} that, for $x>0$,
\begin{equation}\label{tanhb}\frac{x\tanh(x)}{x+(2\nu-1)\tanh(x)}\leq\frac{I_\nu(x)}{I_{\nu-1}(x)}\leq\tanh(x),
\end{equation}
where both the lower and upper bounds are valid for $\nu\geq\frac{1}{2}$ and we have equality if and only if $\nu=\frac{1}{2}$, and by \cite{segura} (see also \cite{amos74,ln10,rs16}) that, for $x>0$,
\begin{equation}\label{sqrtbb}\frac{x}{\nu-\frac{1}{2}+\sqrt{\big(\nu+\frac{1}{2}\big)^2+x^2}}<\frac{I_{\nu}(x)}{I_{\nu-1}(x)}<\frac{x}{\nu-\frac{1}{2}+\sqrt{\big(\nu-\frac{1}{2}\big)^2+x^2}},
\end{equation}
where the lower bound holds for $\nu\geq0$ and the upper bound is valid for $\nu\geq\frac{1}{2}$.  Combining with (\ref{firstcor1}) then gives the following corollary, the bounds of which will in turn be used in the proofs of Theorems \ref{thmcond} and \ref{thmalm}.   


\begin{corollary}\label{bghj}For $\mu>-\frac{1}{2}$, $\frac{1}{2}\leq\nu<\mu+1$ and $x>0$,
\begin{equation}\label{uppp}\frac{x\tanh(x)}{x+(2\nu-1+2b_{\mu,\nu}(x))\tanh(x)}<\frac{\tilde{t}_{\mu,\nu}(x)}{\tilde{t}_{\mu-1,\nu-1}(x)}<\tanh(x)<1.
\end{equation}
Also, for $x>0$
\begin{equation}\label{upper1}\frac{x}{\nu-\frac{1}{2}+2b_{\mu,\nu}(x)+\sqrt{\big(\nu+\frac{1}{2}\big)^2+x^2}}<\frac{\tilde{t}_{\mu,\nu}(x)}{\tilde{t}_{\mu-1,\nu-1}(x)}<\frac{x}{\nu-\frac{1}{2}+\sqrt{\big(\nu-\frac{1}{2}\big)^2+x^2}},
\end{equation}
where the lower bound holds for $\mu>-1$, $0\leq\nu<\mu+1$ and the upper bound holds for $\mu>-\frac{1}{2}$, $\frac{1}{2}\leq\nu<\mu+1$.   
\end{corollary}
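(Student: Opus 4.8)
The plan is to derive both double inequalities purely by inserting the known bounds (\ref{tanhb}) and (\ref{sqrtbb}) for $I_\nu(x)/I_{\nu-1}(x)$ into the two-sided inequality (\ref{firstcor1}) of Theorem \ref{thmil}(ii); no fresh analytic estimate is needed, only substitution and a careful reconciliation of the validity ranges. The two upper bounds are immediate. Chaining the upper bound of (\ref{firstcor1}) with the upper bound of (\ref{tanhb}) gives $\tilde{t}_{\mu,\nu}(x)/\tilde{t}_{\mu-1,\nu-1}(x)<I_\nu(x)/I_{\nu-1}(x)\leq\tanh(x)<1$, which is the upper bound of (\ref{uppp}); replacing (\ref{tanhb}) by the upper bound of (\ref{sqrtbb}) yields the upper bound of (\ref{upper1}) in the same way.

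For the lower bounds I would work from the lower bound of (\ref{firstcor1}), which controls $\tilde{t}_{\mu,\nu}(x)/\tilde{t}_{\mu-1,\nu-1}(x)$ from below by $(I_{\nu-1}(x)/I_\nu(x)+2b_{\mu,\nu}(x)/x)^{-1}$. First I invert the lower bound for $I_\nu(x)/I_{\nu-1}(x)$ to bound $I_{\nu-1}(x)/I_\nu(x)$ from above: the lower bound of (\ref{tanhb}) gives $I_{\nu-1}(x)/I_\nu(x)\leq(x+(2\nu-1)\tanh(x))/(x\tanh(x))$, and the lower bound of (\ref{sqrtbb}) gives $I_{\nu-1}(x)/I_\nu(x)<(\nu-\tfrac12+\sqrt{(\nu+\tfrac12)^2+x^2})/x$. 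Adding $2b_{\mu,\nu}(x)/x$ and combining over the common denominator turns these into $(x+(2\nu-1+2b_{\mu,\nu}(x))\tanh(x))/(x\tanh(x))$ and $(\nu-\tfrac12+2b_{\mu,\nu}(x)+\sqrt{(\nu+\tfrac12)^2+x^2})/x$ respectively. Taking reciprocals reverses these inequalities, and feeding the result into the lower bound of (\ref{firstcor1}) produces exactly the lower bounds of (\ref{uppp}) and (\ref{upper1}).

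It remains to assemble the parameter ranges and to check that every reciprocal step is legitimate. Both halves of (\ref{firstcor1}) are stated for $\mu>-1$, $0\leq\nu<\mu+1$; intersecting with the constraint $\nu\geq\tfrac12$ required by (\ref{tanhb}) and by the upper half of (\ref{sqrtbb}) gives $\mu>-1$, $\tfrac12\leq\nu<\mu+1$, and since $\nu\geq\tfrac12$ together with $\nu<\mu+1$ already forces $\mu>-\tfrac12$, this matches the stated range for (\ref{uppp}) and for the upper bound of (\ref{upper1}); the lower bound of (\ref{sqrtbb}) needs only $\nu\geq0$, so the lower bound of (\ref{upper1}) holds on the full region $\mu>-1$, $0\leq\nu<\mu+1$. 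The main point requiring care---and really the only obstacle---is bookkeeping of inequality directions: I must confirm that on each claimed region the denominators $x+(2\nu-1+2b_{\mu,\nu}(x))\tanh(x)$ and $\nu-\tfrac12+2b_{\mu,\nu}(x)+\sqrt{(\nu+\tfrac12)^2+x^2}$ are strictly positive for $x>0$, which holds because $b_{\mu,\nu}(x)>0$, $\tanh(x)>0$, and $2\nu-1\geq0$ there, so that inverting ratios and taking reciprocals preserves strictness and the composite inequalities are genuinely strict.
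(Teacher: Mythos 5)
Your proposal is correct and follows exactly the paper's route: the paper obtains Corollary \ref{bghj} precisely by combining the two-sided inequalities (\ref{tanhb}) and (\ref{sqrtbb}) for $I_\nu(x)/I_{\nu-1}(x)$ with the double inequality (\ref{firstcor1}) of Theorem \ref{thmil}(ii), with the validity ranges intersected just as you describe. Your extra care with the positivity of denominators and the preservation of strictness (the strict inequalities of (\ref{firstcor1}) guarantee strictness even where (\ref{tanhb}) permits equality at $\nu=\tfrac12$) is sound and consistent with the paper's implicit reasoning.
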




\begin{remark}As $b_{\mu,\nu}(x)<\frac{\mu-\nu+1}{2}$ for all $x>0$, we can obtain the simpler lower bounds
\begin{eqnarray}\label{hopdf}\frac{\tilde{t}_{\mu,\nu}(x)}{\tilde{t}_{\mu-1,\nu-1}(x)}&>&\frac{\tanh(x)}{\mu+\nu+1}, \\
\label{hopdf0}\frac{\tilde{t}_{\mu,\nu}(x)}{\tilde{t}_{\mu-1,\nu-1}(x)}&>&\frac{x}{\mu+\frac{1}{2}+\sqrt{\big(\nu+\frac{1}{2}\big)^2+x^2}},
\end{eqnarray}
which have the same range of validity as (\ref{uppp}) and (\ref{upper1}), respectively.  Similar simplifications can be made to all bounds given in this paper.
\end{remark}


\begin{remark}\label{rem3112} 

All bounds given in Corollary \ref{bghj} are tight as $x\rightarrow\infty$ and of the correct asymptotic order as $x\downarrow0$.  For a comparison, an asymptotic analysis shows that, for fixed $\mu$, when $\frac{1}{2}\leq\nu<1$ the double inequality (\ref{uppp}) performs better in the limit $x\downarrow0$, whilst the double inequality (\ref{upper1}) is better in this limit for $\nu\geq1$, and is most accurate as $x\rightarrow\infty$, expect for the single case $\nu=\frac{1}{2}$.  Moreover, we used \emph{Mathematica} to observe that, for fixed $\mu$, if $\nu\geq1$ then both bounds of (\ref{upper1}) outperform those of (\ref{uppp}).  Except for when $\nu$ is quite close to $\frac{1}{2}$, we find (\ref{upper1}) to be preferable, and as this estimate will be used throughout this paper it is useful to gain some insight into the quality of the approximation.  

Denote the lower and upper bounds of (\ref{upper1}) by $l_{\mu,\nu}^b(x)$ and $u_{\mu,\nu}^b(x)$, respectively, and write $h_{\mu,\nu}(x)=\tilde{t}_{\mu,\nu}(x)/\tilde{t}_{\mu-1,\nu-1}(x)$.  From the asymptotic formula (\ref{ttend0}), we see that in the limit $x\downarrow0$ the relative error in approximating $h_{\mu,\nu}(x)$ by $u_{\mu,\nu}^b(x)$ is $\frac{\mu-\nu+2}{2\nu-1}$. The error blows up as $\nu\downarrow\frac{1}{2}$, but decreases as $\nu$ increases, and the bound is tight when $\nu\rightarrow\infty$ and $\mu/\nu\rightarrow1$ simultaneously.  The relative error in approximating $h_{\mu,\nu}(x)$ by $l_{\mu,\nu}^b(x)$ is 0 in the limit $x\downarrow0$, and furthermore
\begin{align*}l_{\mu,\nu}^b(x)\sim\frac{x}{\mu+\nu+1}-\frac{((\mu-\nu)^2+5\mu-\nu+8)x^3}{(2\nu+1)(\mu+\nu+1)^2((\mu+3)^2-\nu^2)}, \quad x\downarrow0. 
\end{align*}
From (\ref{huvzero}), we see that second term in the expansion of $l_{\mu,\nu}^b(x)$ approaches that of $h_{\mu,\nu}(x)$ when $\nu\rightarrow\infty$ and $(\mu-\nu)^2/\nu\rightarrow0$ simultaneously, as we observed for the lower bound of (\ref{firstcor1}). Also, as $x\rightarrow\infty$, $u_{\mu,\nu}^b(x)/l_{\mu,\nu}^b(x)-1\sim \nu/x^2$,
and so for `large' $x$ the accuracy of the double inequality (\ref{upper1}) decreases as $\nu$ increases.  The $O(x^{-2})$ error in the approximation is much larger than the $O(x^{\mu+1/2}\mathrm{e}^{-x})$ error of (\ref{firstcor1}).  These comments are supported by numerical results given in Tables \ref{table3} and \ref{table4}.
\end{remark}

\begin{table}[h]
\centering
\caption{\footnotesize{Relative error in approximating $ \tilde{t}_{\mu,\nu}(x)/ \tilde{t}_{\mu-1,\nu-1}(x)$ by the lower bound of (\ref{upper1}).}}
\label{table3}
{\scriptsize
\begin{tabular}{|c|rrrrrrrrr|}
\hline
 \backslashbox{$(\mu,\nu)$}{$x$}      &    0.5 &    1 &    2.5 &    5 &    7.5 & 10 & 15 & 25 & 50  \\
 \hline
(-0.5,0) & 1.6e$-$1 & 2.3e$-$1 & 1.1e$-$1 & 1.9e$-$2 & 5.7e$-$3 & 2.8e$-$3 & 1.2e$-$3 & 4.2e$-$4 & 1.0e$-$4 \\
(0.5,1) & 1.0e$-$2 & 3.1e$-$2 & 5.9e$-$2 &  2.9e$-$2 & 1.2e$-$2 & 6.7e$-$3 & 3.1e$-$3 & 1.1e$-$3 & 2.9e$-$4 \\
(2,2.5)  & 1.5e$-$3 & 5.4e$-$3 & 2.0e$-$2 & 2.4e$-$2 & 1.4e$-$2 & 9.3e$-$3 & 4.9e$-$3 & 2.0e$-$3 & 5.5e$-$4 \\
(4.5,5)  & 2.6e$-$4 & 1.0e$-$3 & 5.0e$-$3 & 1.1e$-$2 & 1.1e$-$2 & 8.8e$-$3 & 5.8e$-$3 & 2.9e$-$3 & 9.1e$-$4 \\ 
(9.5,10) & 3.9e$-$5 & 1.5e$-$4 & 8.8e$-$4 & 2.7e$-$3 & 4.1e$-$3 & 4.7e$-$3 & 4.6e$-$3 & 3.3e$-$3 & 1.4e$-$3 \\  
  \hline
(2,0) & 5.0e$-$2 & 1.2e$-$1 & 1.9e$-$1 & 1.2e$-$1 & 3.6e$-$2 & 8.6e$-$3 & 1.3e$-$3  & 4.2e$-$4 & 1.0e$-$4 \\
(3,1) & 9.0e$-$3 & 3.1e$-$2 & 9.4e$-$2 & 9.3e$-$2 & 4.4e$-$2 & 1.6e$-$2 & 3.4e$-$3 & 1.1e$-$3 & 2.9e$-$4 \\
(4.5,2.5)  & 2.0e$-$3 & 7.7e$-$3 & 3.4e$-$2 & 5.6e$-$2 & 4.1e$-$2 & 2.1e$-$2 & 5.6e$-$3 & 2.0e$-$3 & 5.5e$-$4 \\
(7,5)  & 4.6e$-$4 & 1.8e$-$3 & 9.7e$-$3 & 2.4e$-$2 & 2.6e$-$2 & 2.0e$-$2 & 7.5e$-$3 & 2.9e$-$3 & 9.1e$-$4 \\ 
(12,10) & 8.3e$-$5 & 3.3e$-$4 & 1.9e$-$3 & 6.2e$-$3 & 9.7e$-$3 & 1.1e$-$2 & 7.3e$-$3 & 3.4e$-$3 & 1.4e$-$3 \\  
  \hline
(5,0) & 2.8e$-$2 & 7.6e$-$2 & 1.7e$-$1 & 1.9e$-$1 & 1.3e$-$1 & 5.9e$-$2 & 5.4e$-$3 & 4.2e$-$4 & 1.0e$-$4 \\
(6,1) & 7.0e$-$3 & 2.5e$-$2 & 8.9e$-$2 & 1.3e$-$1 & 1.1e$-$1 & 6.4e$-$2 & 9.7e$-$3 & 1.2e$-$3 & 2.9e$-$4 \\
(7.5,2.5)  & 2.0e$-$3 & 7.7e$-$3 & 3.7e$-$2 & 7.8e$-$2 & 8.2e$-$2 & 6.0e$-$2  & 1.5e$-$2 & 2.0e$-$3 & 5.5e$-$4 \\
(10,5)  & 5.5e$-$4 & 2.2e$-$3 & 1.2e$-$2 & 3.4e$-$2 & 4.6e$-$2 & 4.4e$-$2 & 2.0e$-$2 & 3.1e$-$3 & 9.1e$-$4 \\ 
(15,10) & 1.1e$-$4 & 4.5e$-$4 & 2.7e$-$3 & 9.2e$-$3 & 1.6e$-$2 & 2.0e$-$2 & 1.7e$-$2 & 4.1$-$3 & 1.4e$-$3 \\  
  \hline
\end{tabular}}
\end{table}

\begin{table}[h]
\centering
\caption{\footnotesize{Relative error in approximating $ \tilde{t}_{\mu,\nu}(x)/ \tilde{t}_{\mu-1,\nu-1}(x)$ by the upper bound of (\ref{upper1}).}}
\label{table4}
{\scriptsize
\begin{tabular}{|c|rrrrrrrrr|}
\hline
 \backslashbox{$(\mu,\nu)$}{$x$}       &    0.5 &    1 &    2.5 &    5 &    7.5 & 10 & 15 & 25 & 50  \\
 \hline
(0.5,1)  & 1.1e$+$0 & 6.5e$-$1 & 1.5e$-$1 & 2.0e$-$2 & 5.8e$-$3 & 2.8e$-$3 & 1.2e$-$3 & 4.2e$-$4 & 1.0e$-$4 \\
(2,2.5)  & 3.6e$-$1 & 3.2e$-$1 & 1.8e$-$1 & 6.0e$-$2 & 2.1e$-$2 & 1.1e$-$2 & 4.7e$-$3 & 1.7e$-$3 & 4.1e$-4$ \\
(4.5,5)  & 1.6e$-$1 & 1.6e$-$1 & 1.3e$-$1 & 7.2e$-$2 & 3.8e$-$2 & 2.2e$-$2 & 9.8e$-$2 & 3.6e$-$3 & 9.1e$-$4 \\
(9.5,10) & 7.9e$-$2 & 7.8e$-$2 & 7.2e$-$2 & 5.7e$-$2 & 4.1e$-$2 & 2.9e$-$2 & 1.6e$-$2 & 6.9e$-$3 & 1.9e$-$3 \\  
  \hline
(3,1)  & 3.2e$+$0 & 2.1e$+$0 & 7.6e$-$1 & 1.8e$-$1 & 3.9e$-$2 & 8.7e$-$3 & 1.3e$-$3 & 4.2e$-$4 & 1.0e$-$4 \\
(4.5,2.5)  & 9.7e$-$1 & 9.0e$-$1 & 5.9e$-$1 & 2.3e$-$1 & 7.8e$-$2 & 2.6e$-$2 & 5.2e$-$3 & 1.7e$-$3 & 4.1e$-$4 \\
(7,5)  & 4.4e$-$1 & 4.3e$-$1 & 3.7e$-$1 & 2.2e$-$1 & 1.1e$-$1 & 5.1e$-$2 & 1.2e$-$2 & 3.6e$-$3 & 9.1e$-$4 \\
(12,10) & 2.1e$-$1 & 2.1e$-$1 & 2.0e$-$1 & 1.6e$-$1 & 1.1e$-$1 & 7.1e$-$2 & 2.5e$-$2 & 6.9e$-$3 & 1.9e$-$3 \\  
  \hline
(6,1)  & 5.6e$+$0 & 4.0e$+$0 & 1.7e$+$0 & 5.7e$-$1 & 2.1e$-$1 & 6.9e$-$2 & 5.4e$-$3 & 4.2e$-$4 & 1.0e$-$4 \\
(7.5,2.5)  & 1.7e$+$0 & 1.6e$+$0 & 1.1e$+$0 & 5.6e$-$1 & 2.6e$-$1 & 1.1e$-$1 & 1.5e$-$2 & 1.7e$-$3  & 4.1e$-4$ \\
(10,5)  & 7.7e$-$1 & 7.6e$-$1 & 6.7e$-$1 & 4.6e$-$1 & 2.8e$-$1 & 1.5e$-$1 & 3.4e$-$2 & 3.6e$-$3 & 9.1e$-$4 \\
(15,10) & 3.7e$-$1 & 3.7e$-$1  & 3.5e$-$1 & 3.0e$-$1 & 2.3e$-$1 & 1.6e$-$1 & 6.5e$-$2 & 8.2e$-$3 & 1.9e$-$3 \\  
  \hline
\end{tabular}}
\end{table}

\section{Further bounds for modified Lommel functions of the first kind and their ratios}\label{sec3}

In this section, we apply the bounds of Section \ref{sec2} for the ratio $ \tilde{t}_{\mu,\nu}(x)/ \tilde{t}_{\mu-1,\nu-1}(x)$ to obtain further functional inequalities for the modified Lommel function $\tilde{t}_{\mu,\nu}(x)$. 

\subsection{Bounds for the condition numbers}\label{sec3.2}

Following the notation of \cite{segura}, we write $C\big( \tilde{t}_{\mu,\nu}(x)\big)=x \tilde{t}'_{\mu,\nu}(x)/ \tilde{t}_{\mu,\nu}(x)$ and $C\big(I_\nu(x)\big)=xI'_\nu(x)/I_\nu(x)$.  These are positive quantities if $\mu-\nu\geq-3$ and $\mu+\nu\geq-3$, and $\nu\geq-1$, respectively. From (\ref{struveid1}) and (\ref{struveid2}) we obtain the relations
\begin{align}\label{star60}C\big( \tilde{t}_{\mu,\nu}(x)\big)&=\frac{x \tilde{t}_{\mu-1,\nu-1}(x)}{ \tilde{t}_{\mu,\nu}(x)}-\nu, \\
\label{star61}C\big( \tilde{t}_{\mu,\nu}(x)\big)&=\frac{x \tilde{t}_{\mu+1,\nu+1}(x)}{ \tilde{t}_{\mu,\nu}(x)}+\nu+2b_{\mu,\nu}(x),
\end{align}
and thus bounds for $\tilde{t}_{\mu,\nu}(x)/\tilde{t}_{\mu-1,\nu-1}(x)$ immediately lead to bounds for the condition number $C\big( \tilde{t}_{\mu,\nu}(x)\big)$.  In the following theorem, we give a two-sided inequality for $C\big( \tilde{t}_{\mu,\nu}(x)\big)$ in terms of $C\big(I_\nu(x)\big)$.  This result parallels inequality (\ref{firstcor1}) of Theorem \ref{thmil} by allowing one to use the literature on bounds for $C\big(I_\nu(x)\big)$ (see \cite{amos74,b092,g32,ln10,p99,pm50,segura}) to bound $C\big( \tilde{t}_{\mu,\nu}(x)\big)$.  The first inequalities for $C\big(I_\nu(x)\big)$, due to \cite{g32}, were motivated by a problem in wave mechanics, and some comments on the utility of the condition numbers $C\big(f(x)\big)=|xf'(x)/f(x)|$ for comparing functions are given in \cite{segura}.   





\begin{theorem}\label{thmcond}The following inequalities hold:

\vspace{2mm}

\noindent (i) For $x>0$,
\begin{equation}\label{condlo}C\big(I_\nu(x)\big)<C\big( \tilde{t}_{\mu,\nu}(x)\big)<C\big(I_\nu(x)\big)+2b_{\mu,\nu}(x),
\end{equation}
where the lower bound is valid for $\mu>-1$, $0\leq\nu<\mu+1$ and the upper bound is valid for $\mu>-2$, $-1\leq\nu<\mu+1$.

\vspace{2mm}

\noindent (ii) For $x>0$,
\begin{align}\label{star10}\sqrt{\big(\nu-\tfrac{1}{2}\big)^2+x^2}-\tfrac{1}{2}<C\big( \tilde{t}_{\mu,\nu}(x)\big)<\sqrt{\big(\nu+\tfrac{1}{2}\big)^2+x^2}+2b_{\mu,\nu}(x)-\tfrac{1}{2},
\end{align}
and
\begin{equation}\label{near44}x\coth(x)-\nu<C\big( \tilde{t}_{\mu,\nu}(x)\big)<x\tanh(x)+v+2b_{\mu,\nu}(x),
\end{equation}
where the lower bounds of (\ref{star10}) and (\ref{near44}) hold for $\mu>-\frac{1}{2}$, $\frac{1}{2}\leq\nu<\mu+1$, and the upper bounds of (\ref{star10}) and (\ref{near44}) hold for $\mu>-\frac{3}{2}$, $-\frac{1}{2}\leq\nu<\mu+1$.
\end{theorem}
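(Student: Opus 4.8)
My plan is to reduce both parts entirely to ratio bounds already established, exploiting the two exact identities \eqref{star60} and \eqref{star61} together with their Bessel counterparts. First I would record that adding and subtracting the recurrences \eqref{123e} and \eqref{456e} gives the two representations
\begin{equation*}
C\big(I_\nu(x)\big)=\frac{xI_{\nu-1}(x)}{I_\nu(x)}-\nu=\frac{xI_{\nu+1}(x)}{I_\nu(x)}+\nu,
\end{equation*}
which mirror \eqref{star60} and \eqref{star61} exactly (the $2b_{\mu,\nu}$ term being the Lommel-specific correction). With these in hand, part (i) is a rearrangement. After cancelling $-\nu$ and clearing the positive denominators, the lower bound $C(I_\nu(x))<C(\tilde t_{\mu,\nu}(x))$ is equivalent to $\tilde t_{\mu,\nu}(x)/\tilde t_{\mu-1,\nu-1}(x)<I_\nu(x)/I_{\nu-1}(x)$, i.e.\ precisely the upper bound of \eqref{firstcor1}, so it inherits the range $\mu>-1$, $0\le\nu<\mu+1$. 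For the upper bound I would not route through \eqref{firstcor1} but instead divide \eqref{bound456} through by $I_\nu(x)\tilde t_{\mu,\nu}(x)$, use $b_{\mu,\nu}(x)=xa_{\mu,\nu}(x)/(2\tilde t_{\mu,\nu}(x))$ from \eqref{bdefn}, and multiply by $x$; this gives $C(\tilde t_{\mu,\nu}(x))-C(I_\nu(x))<2b_{\mu,\nu}(x)$ directly, and since \eqref{bound456} is valid on a larger set it delivers the stated range $\mu>-2$, $-1\le\nu<\mu+1$.

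For part (ii) the strategy is to feed the concrete ratio bounds of Corollary \ref{bghj} into the identities, and the key organising principle is that the lower and upper bounds should come from \emph{different} identities. Writing $h_{\mu,\nu}(x)=\tilde t_{\mu,\nu}(x)/\tilde t_{\mu-1,\nu-1}(x)$, for the lower bounds I would use \eqref{star60} in the form $C(\tilde t_{\mu,\nu}(x))=x/h_{\mu,\nu}(x)-\nu$: the upper bounds for $h_{\mu,\nu}$ in \eqref{uppp} and \eqref{upper1} invert to lower bounds for $x/h_{\mu,\nu}$, and subtracting $\nu$ produces $x\coth(x)-\nu$ and $\sqrt{(\nu-\frac12)^2+x^2}-\frac12$ respectively, on the range $\mu>-\frac12$, $\frac12\le\nu<\mu+1$ carried by those upper bounds. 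For the upper bounds I would instead use \eqref{star61}, $C(\tilde t_{\mu,\nu}(x))=x\,h_{\mu+1,\nu+1}(x)+\nu+2b_{\mu,\nu}(x)$, and apply the upper bounds of \eqref{uppp} and \eqref{upper1} with the indices shifted to $(\mu+1,\nu+1)$. This is exactly the step that widens the interval to $\mu>-\frac32$, $-\frac12\le\nu<\mu+1$, since the shift turns the hypothesis $\frac12\le\nu+1$ into $-\frac12\le\nu$. The only genuine computation is the rationalisation
\begin{equation*}
x\cdot\frac{x}{\nu+\frac12+\sqrt{(\nu+\frac12)^2+x^2}}=\sqrt{\big(\nu+\tfrac12\big)^2+x^2}-\big(\nu+\tfrac12\big),
\end{equation*}
after which the $+\nu$ from \eqref{star61} cancels and the bound collapses to the clean form in \eqref{star10}; the $\tanh$ case in \eqref{near44} is immediate since $x\,h_{\mu+1,\nu+1}(x)<x\tanh(x)$.

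The part I would treat most carefully is the bookkeeping of the ranges of validity, not any analytic difficulty. At each division or inequality inversion I would verify strict positivity of the relevant denominators, using the series criteria $\mu-\nu\ge-3$, $\mu+\nu\ge-3$ for the Lommel functions and $\nu\ge-1$ for $I_\nu$, and I would check that after the substitution $(\mu,\nu)\mapsto(\mu+1,\nu+1)$ the hypotheses of Corollary \ref{bghj} reproduce exactly the claimed conditions $\mu>-\frac32$, $-\frac12\le\nu<\mu+1$. The one conceptual point to confirm is that selecting \eqref{star61} rather than \eqref{star60} for the upper bounds is precisely what enlarges the admissible $\nu$-interval; once that is in place, both \eqref{star10} and \eqref{near44} follow from the two short manipulations above.
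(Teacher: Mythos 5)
Your proposal is correct and follows essentially the same route as the paper: both parts are reduced to the ratio bounds of Section \ref{sec2} via the identities (\ref{star60}) and (\ref{star61}), with the index shift $(\mu,\nu)\mapsto(\mu+1,\nu+1)$ applied in (\ref{star61}) producing exactly the enlarged ranges for the upper bounds, just as in the paper's argument. The only cosmetic difference is your treatment of the upper bound in part (i), where you divide (\ref{bound456}) directly and combine with (\ref{star60}) rather than feeding the shifted upper bound of (\ref{firstcor1}) into (\ref{star61}); since (\ref{bound456}) at $(\mu,\nu)$ is precisely (\ref{bounddo}) at $(\mu+1,\nu+1)$ (equivalently, your step is a rearrangement of the lower bound of (\ref{firstcor1})), the two derivations are the same inequality in different clothing and yield the identical range $\mu>-2$, $-1\leq\nu<\mu+1$.
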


\begin{proof}\noindent (i) From the relations (\ref{star60}) and (\ref{star61}) for $ \tilde{t}_{\mu,\nu}(x)$, as well as the corresponding relations (\ref{123e}) and (\ref{456e}) for $I_\nu(x)$, and the upper bound of inequality (\ref{firstcor1}), we obtain, for $\mu>-1$, $0\leq\nu<\mu+1$,
\begin{align*}\frac{x \tilde{t}_{\mu,\nu}'(x)}{ \tilde{t}_{\mu,\nu}(x)}=\frac{x \tilde{t}_{\mu-1,\nu-1}(x)}{ \tilde{t}_{\mu,\nu}(x)}-\nu>\frac{xI_{\nu-1}(x)}{I_\nu(x)}-\nu=\frac{xI_\nu'(x)}{I_\nu(x)},
\end{align*}
and, for $\mu>-2$, $-1\leq\nu<\mu+1$,
\begin{align*}\frac{x \tilde{t}_{\mu,\nu}'(x)}{ \tilde{t}_{\mu,\nu}(x)}=\frac{x \tilde{t}_{\mu+1,\nu+1}(x)}{ \tilde{t}_{\mu,\nu}(x)}+\nu+2b_{\mu,\nu}(x)<\frac{xI_{\nu+1}(x)}{I_\nu(x)}+\nu+2b_{\mu,\nu}(x)=\frac{xI_\nu'(x)}{I_\nu(x)}+2b_{\mu,\nu}(x).
\end{align*}

\vspace{2mm}

\noindent (ii) Combine the upper bounds of (\ref{uppp}) and (\ref{upper1}) with formulas (\ref{star60}) and (\ref{star61}).    
\end{proof}

\subsection{Bounds for the ratio $t_{\mu,\nu}(x)/t_{\mu,\nu}(y)$ and the modified Lommel function $t_{\mu,\nu}(x)$}\label{sec3.3}



In the spirit of Section \ref{sec3.2}, we note that some basic manipulations allow one to exploit the bounds of Section \ref{sec2} to bound the ratio $\tilde{t}_{\mu,\nu}(x)/\tilde{t}_{\mu,\nu}(y)$.  To this end, on integrating the relations
\begin{align*}\frac{ \tilde{t}_{\mu,\nu}'(u)}{ \tilde{t}_{\mu,\nu}(u)}=\frac{ \tilde{t}_{\mu-1,\nu-1}(u)}{ \tilde{t}_{\mu,\nu}(u)}-\frac{\nu}{u}, \quad
\frac{ \tilde{t}_{\mu,\nu}'(u)}{ \tilde{t}_{\mu,\nu}(u)}=\frac{ \tilde{t}_{\mu+1,\nu+1}(u)}{ \tilde{t}_{\mu,\nu}(u)}+\frac{\nu}{u}+\frac{a_{\mu,\nu}(u)}{ \tilde{t}_{\mu,\nu}(u)}
\end{align*}
between $x$ and $y$ one obtains
\begin{align}\label{star30}\frac{ \tilde{t}_{\mu,\nu}(x)}{ \tilde{t}_{\mu,\nu}(y)}&=\bigg(\frac{y}{x}\bigg)^\nu\exp\bigg(-\int_x^y\frac{ \tilde{t}_{\mu-1,\nu-1}(u)}{ \tilde{t}_{\mu,\nu}(u)}\,\mathrm{d}u\bigg), \\
\label{star305}\frac{ \tilde{t}_{\mu,\nu}(x)}{ \tilde{t}_{\mu,\nu}(y)}&=\bigg(\frac{x}{y}\bigg)^\nu\exp\bigg(-2\int_x^y\frac{b_{\mu,\nu}(u)}{u}\,\mathrm{d}u\bigg)\exp\bigg(-\int_x^y\frac{ \tilde{t}_{\mu+1,\nu+1}(u)}{ \tilde{t}_{\mu,\nu}(u)}\,\mathrm{d}u\bigg).
\end{align}
If $\mu>-2$ and $|\nu+1|<\mu+2$, then by inequality (\ref{bcrude2}) we have
\begin{align}\label{mncz}2\int_x^y\frac{b_{\mu,\nu}(u)}{u}\,\mathrm{d}u&<\int_x^y\frac{\mu-\nu+1}{u\big(1+\frac{1}{(\mu+3)^2-\nu^2}u^2\big)}\,\mathrm{d}u=(\mu-\nu+1)\log\Bigg(\!\frac{y}{x}\sqrt{\frac{(\mu+3)^2-\nu^2+x^2}{(\mu+3)^2-\nu^2+y^2}}\Bigg),
\end{align}
and substituting into (\ref{star305}) then yields the lower bound
\begin{equation}\label{star318}\frac{ \tilde{t}_{\mu,\nu}(x)}{ \tilde{t}_{\mu,\nu}(y)}>\bigg(\frac{x}{y}\bigg)^\nu\bigg(\frac{(\mu+3)^2-\nu^2+y^2}{(\mu+3)^2-\nu^2+x^2}\bigg)^{\frac{\mu-\nu+1}{2}}\exp\bigg(-\int_x^y\frac{ \tilde{t}_{\mu+1,\nu+1}(u)}{ \tilde{t}_{\mu,\nu}(u)}\,\mathrm{d}u\bigg).
\end{equation}
Also, if $\mu\geq-\frac{1}{2}$ and $(\mu+3)^2-\nu^2\geq6$, then using inequality (\ref{star5}) gives 
\begin{equation*}2\int_x^y\frac{b_{\mu,\nu}(u)}{u}\,\mathrm{d}u\geq(\mu-\nu+1)\int_x^y\mathrm{csch}(u)\,\mathrm{d}u=(\mu-\nu+1)\log\Bigg(\frac{\tanh\big(\frac{1}{2}x\big)}{\tanh\big(\frac{1}{2}y\big)}\Bigg),
\end{equation*}
and combining with (\ref{star305}) then yields the upper bound
\begin{equation}\label{star31}\frac{ \tilde{t}_{\mu,\nu}(x)}{ \tilde{t}_{\mu,\nu}(y)}\leq\bigg(\frac{x}{y}\bigg)^\nu\bigg(\frac{\tanh\big(\frac{1}{2}x\big)}{\tanh\big(\frac{1}{2}y\big)}\bigg)^{\mu-\nu+1}\exp\bigg(-\int_x^y\frac{ \tilde{t}_{\mu+1,\nu+1}(u)}{ \tilde{t}_{\mu,\nu}(u)}\,\mathrm{d}u\bigg).
\end{equation}
We combine (\ref{star30}), (\ref{star318}) and (\ref{star31}) with the results of Section \ref{sec2} to prove the following theorem.  In this theorem, we give several different bounds for $\tilde{t}_{\mu,\nu}(x)/\tilde{t}_{\mu,\nu}(y)$ and $\tilde{t}_{\mu,\nu}(x)$, some of which are based on the estimates of the doubles inequalities (\ref{tanhb}) and (\ref{sqrtbb}) of \cite{ifantis} and \cite{segura} for $I_{\nu}(x)/I_{\nu-1}(x)$.  However, we note that the extensive literature on bounds for $I_\nu(x)/I_\nu(y)$ (with a number of such bounds surveyed in \cite{baricz2}) allow for many other bounds to be readily obtained, which we omit for space reasons. 

\begin{theorem}\label{thmalm}The following inequalities hold:

\vspace{2mm}

\noindent (i) For $0<x<y$,
\begin{equation}\label{thmalm1}\bigg(\frac{x}{y}\bigg)^{\mu-\nu+1}\bigg(\frac{(\mu+3)^2-\nu^2+y^2}{(\mu+3)^2-\nu^2+x^2}\bigg)^{\frac{\mu-\nu+1}{2}}\frac{I_\nu(x)}{I_\nu(y)}<\frac{ \tilde{t}_{\mu,\nu}(x)}{ \tilde{t}_{\mu,\nu}(y)}<\frac{I_\nu(x)}{I_\nu(y)},
\end{equation}
where the lower bound holds for $\mu>-2$, $-1\leq\nu<\mu+1$ and the upper bound holds for $\mu>-1$, $0\leq\nu<\mu+1$.

\vspace{2mm}

\noindent (ii) For $0<x<y$,
\begin{align}&\frac{\mathrm{e}^{\sqrt{(\nu+1/2)^2+x^2}}}{\mathrm{e}^{\sqrt{(\nu+1/2)^2+y^2}}}\bigg(\frac{x}{y}\bigg)^{\mu+1}\bigg(\frac{(\mu+3)^2-\nu^2+y^2}{(\mu+3)^2-\nu^2+x^2}\bigg)^{\frac{\mu-\nu+1}{2}}\Bigg(\frac{\nu+\frac{1}{2}+\sqrt{(\nu+\frac{1}{2})^2+y^2}}{\nu+\frac{1}{2}+\sqrt{(\nu+\frac{1}{2})^2+x^2}}\Bigg)^{\nu+\frac{1}{2}}<\nonumber\\
\label{thmu9}&<\frac{ \tilde{t}_{\mu,\nu}(x)}{ \tilde{t}_{\mu,\nu}(y)}<\frac{\mathrm{e}^{\sqrt{(\nu+3/2)^2+x^2}}}{\mathrm{e}^{\sqrt{(\nu+3/2)^2+y^2}}}\bigg(\frac{\tanh\big(\frac{1}{2}x\big)}{\tanh\big(\frac{1}{2}y\big)}\bigg)^{\mu-\nu+1}\bigg(\frac{x}{y}\bigg)^{\nu}\Bigg(\frac{\mu+\frac{3}{2}+\sqrt{(\nu+\frac{3}{2})^2+y^2}}{\mu+\frac{3}{2}+\sqrt{(\mu+\frac{3}{2})^2+x^2}}\Bigg)^{\mu+\frac{3}{2}},
\end{align}
where the lower bound holds for $\mu>-\frac{3}{2}$, $-\frac{1}{2}\leq\nu<\mu+1$ and the upper bound holds for $\mu\geq-\frac{1}{2},$ $-1\leq\nu<\mu+1$, $(\mu+3)^2-\nu^2\geq6$.

\vspace{2mm}

\noindent (iii) For $x>0$,
\begin{align}\label{star271}I_\nu(x)<\bigg(\frac{x^2}{(\mu+3)^2-\nu^2+x^2}\bigg)^{-\frac{\mu-\nu+1}{2}}\tilde{t}_{\mu,\nu}(x)
<C_{\mu,\nu}I_\nu(x),
\end{align}
where
\[C_{\mu,\nu}=\frac{((\mu+3)^2-\nu^2)^{\frac{\mu-\nu+1}{2}}\Gamma(\nu+1)}{2^{\mu-\nu+1}\Gamma\big(\frac{\mu-\nu+3}{2}\big)\Gamma(\frac{\mu+\nu+3}{2}\big)}.\]
Here both the lower and upper bounds hold for $\mu>-2$, $-1<\nu<\mu+1$.  Also, 
\begin{equation}\label{star272}\tilde{t}_{\mu,\nu}(x)<I_\nu(x), \quad x>0,\: \mu>-1,\: 0\leq\nu<\mu+1.
\end{equation}

\vspace{2mm}

\noindent (iv) Let $\mu>-\frac{3}{2}$, $-\frac{1}{2}\leq\nu<\mu+1$. Then, for $x>0$,
\begin{align}\label{near33}\frac{1}{\sqrt{2\pi}}<\frac{\big((\mu+3)^2-\nu^2+x^2\big)^{\frac{\mu-\nu+1}{2}}\big(\nu+\frac{1}{2}+\sqrt{\big(\nu+\frac{1}{2}\big)^2+x^2}\big)^{\nu+\frac{1}{2}}}{x^{\mu+1}\mathrm{e}^{\sqrt{(\nu+1/2)^2+x^2}}}\tilde{t}_{\mu,\nu}(x)<C_{\mu,\nu}',
\end{align}
where 
\[C_{\mu,\nu}'=\frac{((\mu+3)^2-\nu^2)^{\frac{\mu-\nu+1}{2}}(\mathrm{e}^{-1}(2\nu+1))^{\nu+\frac{1}{2}}}{2^{\mu+1}\Gamma\big(\frac{\mu-\nu+3}{2}\big)\Gamma\big(\frac{\mu+\nu+3}{2}\big)}.\]
\end{theorem}


\begin{proof}(i) Let us first note the following integral formula (see \cite[p$.$ 561]{gaunt ineq5}):
\begin{align*}\int_x^y\frac{I_{\nu\pm1}(u)}{I_{\nu}(u)}\,\mathrm{d}u&=\log\bigg(\frac{I_{\nu}(y)}{I_{\nu}(x)}\bigg)\mp\nu\log\bigg(\frac{y}{x}\bigg).
\end{align*}
With the aid of this formula, combining the upper bound of (\ref{firstcor1}) with (\ref{star30}) and (\ref{star318}) leads to the upper and lower bounds, respectively.



\vspace{2mm}

\noindent (ii) We proceed as in part (i) and first note the integral formula
\begin{equation}\label{mjyu}\int_x^y\frac{u}{a+\sqrt{b^2+u^2}}\,\mathrm{d}u=\sqrt{b^2+y^2}-\sqrt{b^2+x^2} +a\log\bigg(\frac{a+\sqrt{b^2+x^2}}{a+\sqrt{b^2+y^2}}\bigg).
\end{equation}
With this formula at hand, applying inequality (\ref{hopdf0}) to (\ref{star31}) yields the upper bound, whilst combining the upper bound of (\ref{upper1}) with (\ref{star318}) yields the lower bound.

\vspace{2mm}


\noindent (iii) For the lower bound in (\ref{star271}) and inequality (\ref{star272}), let $y\rightarrow\infty$ in the double inequality (\ref{thmalm1}).  For the upper bound in (\ref{star271}), let $x\downarrow0$ in the lower bound of (\ref{thmalm1}), and then replace $y$ by $x$.  In computing the limits, we use (\ref{linfty}), (\ref{iinfty}) and (\ref{ttend0}).

\vspace{2mm}

\noindent (iv) Proceed as in part (iii) by taking appropriate limits in (\ref{thmu9}). 
\end{proof}

\begin{remark}\label{rem12}For fixed $y>0$, both bounds of (\ref{thmu9}) are $O(x^{\mu+1})$, as $x\downarrow0$.  For fixed $x>0$, as $y\rightarrow\infty$, the lower bound is $O(y^{1/2}\mathrm{e}^{-y})$, which is the correct order (see (\ref{linfty})), whereas the upper bound is $O(y^{3/2}\mathrm{e}^{-y})$.  That the upper bound is not of the correct order as $y\rightarrow\infty$ can be at least partly traced back to the use of the inequality $b_{\mu,\nu}(x)<\frac{\mu-\nu+1}{2}$ (which is very crude for large $x$) in obtaining inequality (\ref{hopdf0}) from the lower bound of (\ref{upper1}). In deriving the lower bound (\ref{thmu9}), we used the refined inequality (\ref{bcrude2}) to obtain inequality (\ref{mncz}), which enabled us to obtain the correct order as $y\rightarrow\infty$, but using this inequality to bound the lower bound of (\ref{upper1}) leads to an integral that is less tractable than (\ref{mjyu}).  
\end{remark}

\begin{remark}Given the extensive literature on bounds for ratios of modified Bessel functions of the first kind, many other inequalities can be obtained than those given in Theorem \ref{thmalm}.  For example, by applying inequalities (2.1) and (2.3) of \cite{ifantis} to the upper bound of (\ref{thmalm1}) and the lower bound of (\ref{star271}), respectively, we obtain
\begin{eqnarray*}\frac{\tilde{t}_{\mu,\nu}(x)}{\tilde{t}_{\mu,\nu}(y)}&<&\bigg(\frac{x}{y}\bigg)^\nu\bigg(\frac{\cosh(x)}{\cosh(y)}\bigg)^\frac{1}{2(\nu+1)}, \quad 0<x<y,\:\mu>-1,\:\tfrac{1}{2}\leq\nu<\mu+1, \\
\tilde{t}_{\mu,\nu}(x)&>&\frac{1}{2^\nu\Gamma(\nu+1)}\frac{x^{\mu+1}\cosh^{\frac{1}{2(\nu+1)}}(x)}{\big((\mu+3)^2-\nu^2+x^2\big)^{\frac{\mu-\nu+1}{2}}}, \quad x>0,\:\mu>-2,\: -\tfrac{1}{2}\leq\nu<\mu+1.
\end{eqnarray*}
\end{remark}

\begin{remark}Setting $\mu=\nu$ in (\ref{star271}) yields the upper bound
\begin{equation}\label{near22}\mathbf{L}_\nu(x)<\frac{\Gamma(\nu+1)\sqrt{3(2\nu+3)}}{\sqrt{\pi}\Gamma(\nu+\frac{3}{2})}\frac{xI_\nu(x)}{\sqrt{x^2+3(2\nu+3)}}, \quad x>0,\:\nu>-1,
\end{equation}
which complements the following inequality of \cite{bp14}:
\begin{equation}\label{bpstu}\mathbf{L}_\nu(x)\leq\frac{2\Gamma(\nu+2)}{\sqrt{\pi}\Gamma(\nu+\frac{3}{2})}I_{\nu+1}(x), \quad x>0, \:\nu\geq-\tfrac{1}{2},
\end{equation}
with equality if and only if $\nu=-\frac{1}{2}$. Both inequalities are tight as $x\downarrow0$ and are of the correct asymptotic order as $x\rightarrow\infty$.  An application of Stirling's inequality \cite[5.6.1]{olver} shows that the multiplicative constant in (\ref{near22}) is bounded between $\sqrt{6/\pi}$ and $\sqrt{6}$ for all $\nu\geq-\frac{1}{2}$, but that the constant in (\ref{bpstu}) is $O(\sqrt{\nu})$ as $\nu\rightarrow\infty$.
\end{remark}
 
\begin{remark}The upper bound in (\ref{near33}) generalises the upper bound of inequality (3.45) of \cite{gaunt ineq5}, which gives a bound for the modified Struve function $\mathbf{L}_\nu(x)$; see Remark 3.6 of that paper for comments on the performance of the bound.  Our upper bound is tight as $x\downarrow0$ and of the correct asymptotic order as $x\rightarrow\infty$, with relative error $\sqrt{2\pi}C'_{\mu,\nu}-1$ in this limit (recall that $\tilde{t}_{\mu,\nu}(x)\sim \frac{1}{\sqrt{2\pi x}}\mathrm{e}^x$ as $x\rightarrow\infty$).  The constant $C_{\mu,\nu}'$ is quite complicated, but we can gain some insight by using Stirling's approximation \cite[5.6.1]{olver}.  For fixed $\nu\geq-\frac{1}{2}$, $C_{\mu,\nu}'=O(\mu^{-\nu-1}\mathrm{e}^\mu)$ as $\mu\rightarrow\infty$.  Now, let $k=\mu-\nu$ be fixed.  Then, as $\mu\rightarrow\infty$,
\begin{equation*}C_{\mu,\nu}'\sim g(k):=\frac{(k+3)^{\frac{k+1}{2}}\big(\frac{\mathrm{e}}{2}\big)^{\frac{k}{2}}}{\sqrt{2\pi}\Gamma\big(\frac{k+3}{2}\big)}.
\end{equation*}  
One can check that $g$ is an increasing function of $k$ on $[-\frac{1}{2},\infty)$, with minimum value $g(-\frac{1}{2})=0.5125\ldots$.  This is consistent with our findings in Remarks \ref{remaa} and \ref{rem3112}, which suggest that the double inequalities (\ref{firstcor1}) and (\ref{upper1}) become more accurate as $k=\mu-\nu$ decreases.  

In contrast to the upper bound of (\ref{near33}), the lower bound is tight as $x\rightarrow\infty$ but only correct up to asymptotic order as $x\downarrow0$, with a relative error of $1-(\sqrt{2\pi}C_{\mu,\nu}')^{-1}$ in this limit.  On setting $\mu=\nu$ in the lower bound of (\ref{near33}) we obtain the new bound
\begin{equation}\label{llowerr}\mathbf{L}_\nu(x)>\frac{(2\pi)^{-\frac{1}{2}}x^{\mu+1}\mathrm{e}^{\sqrt{(\nu+1/2)^2+x^2}}}{\sqrt{3(2\nu+3)+x^2}\big(\nu+\frac{1}{2}+\sqrt{(\nu+\frac{1}{2})^2+x^2}\big)^{\nu+\frac{1}{2}}},\quad x>0, \:\nu\geq-\tfrac{1}{2},
\end{equation}
which when combined with the upper bound of inequality (3.45) of \cite{gaunt ineq5} gives a two-sided inequality for $\mathbf{L}_\nu(x)$ that is of the correct asymptotic order in both the limits $x\downarrow0$ and $x\rightarrow\infty$.  Numerical results (see Table \ref{table5}) suggest that, for fixed $\nu$, the relative error in approximating $\mathbf{L}_\nu(x)$ decreases from the initial value of $1-(\sqrt{2\pi}C_{\nu,\nu}')^{-1}$ at $x=0$ down to $0$ as $x$ increases. 

Lastly, we note that setting $x\downarrow0$ in the upper bound of (\ref{thmu9}) and then replacing $y$ by $x$ gives the following alternative to the lower bound of (\ref{near33}): 
\begin{equation*}\tilde{t}_{\mu,\nu}(x)>\frac{\mathrm{e}^{\sqrt{(\nu+3/2)^2+x^2}-\nu-3/2}}{2^{\nu}\Gamma\big(\frac{\mu-\nu+3}{2}\big)\Gamma\big(\frac{\mu+\nu+3}{2}\big)}x^\nu\tanh^{\mu-\nu+1}\Big(\frac{x}{2}\Big)\Bigg(\frac{2\mu+3}{\mu+\frac{3}{2}+\sqrt{\big(\nu+\frac{3}{2}\big)^2+x^2}}\Bigg)^{\mu+\frac{3}{2}},
\end{equation*}
$x>0$, $\mu\geq-\frac{1}{2},$ $-1\leq\nu<\mu+1$, $(\mu+3)^2-\nu^2\geq6$.  This inequality is tight as $x\downarrow0$, but is $O(x^{-3/2}\mathrm{e}^x)$ as $x\rightarrow\infty$, which is smaller than the $O(x^{-1/2}\mathrm{e}^x)$ rate of $\tilde{t}_{\mu,\nu}(x)$.



\begin{table}[h]
\centering
\caption{\footnotesize{Relative error in approximating $\mathbf{L}_{\nu}(x)$ by (\ref{llowerr}).}}
\label{table5}
{\scriptsize
\begin{tabular}{|c|rrrrrrrrrr|}
\hline
 \backslashbox{$\nu$}{$x$}      &   0.5 &    1 &    2.5 &    5 &     10 & 15 & 25 & 50 & 100 & 200 \\
 \hline
0 & 6.3e$-$1 & 5.7e$-1$ & 3.8e$-1$ & 1.8e$-1$  & 6.7e$-$2 & 3.6e$-2$ & 1.7e$-2$ & 6.8e$-1$3 & 3.0e$-3$ & 1.4e$-3$ \\
1 & 5.7e$-1$ & 5.6e$-$1 & 4.6e$-$1  & 2.9e$-1$ & 1.3e$-$1 & 7.8e$-$2 & 4.1e$-$2 & 1.8e$-2$ & 8.2e$-$3 & 3.9e$-$3 \\
2.5 & 5.4e$-$1 & 5.4e$-$1  & 4.9e$-$1  & 3.8e$-$1 & 2.1e$-$1 & 1.3e$-$1 & 7.3e$-$2 & 3.4e$-$2 & 1.6e$-$2 & 7.7e$-3$ \\
5 & 5.2e$-$1 & 5.2e$-$1  & 5.0e$-$1  & 4.5e$-$1 & 3.0e$-$1 & 2.1e$-$1 & 1.2e$-$1 & 5.8e$-$2 & 2.8e$-$2 & 1.4e$-$2 \\
10 & 5.1e$-$1 & 5.1e$-$1  & 5.0e$-$1  & 4.8e$-$1 & 3.9e$-$1 & 3.0e$-$1 & 2.0e$-$1 & 1.0e$-$1 & 5.2e$-$2 & 2.6e$-$2 \\  
\hline
\end{tabular}}
\end{table}

\end{remark}

\subsection*{Acknowledgements}
The author is supported by a Dame Kathleen Ollerenshaw Research Fellowship.  The author would like to thank the referee for their constructive comments and suggestions.

\footnotesize

\end{document}